\numberwithin{equation}{section}
\numberwithin{figure}{section}
 \theoremstyle{definition}
 \newtheorem*{defn*}{\protect\definitionname}
\theoremstyle{plain}
\newtheorem{thm}{\protect\theoremname}
  \theoremstyle{plain}
  \newtheorem{cor}[thm]{\protect\corollaryname}
  \theoremstyle{remark}
  \newtheorem{rem}[thm]{\protect\remarkname}
  \theoremstyle{plain}
  \newtheorem{lem}[thm]{\protect\lemmaname}
  \theoremstyle{plain}
  \newtheorem{prop}[thm]{\protect\propositionname}
  \theoremstyle{plain}
  \newtheorem{criterion}[thm]{\protect\criterionname}
  \theoremstyle{definition}
  \newtheorem{example}[thm]{\protect\examplename}
\DeclareFontFamily{OT1}{pzc}{}
\DeclareFontShape{OT1}{pzc}{m}{it}%
             {<-> s * [1.195] pzcmi7t}{}
\DeclareMathAlphabet{\mathscr}{OT1}{pzc}%
                                 {m}{it}
  \providecommand{\corollaryname}{Corollary}
  \providecommand{\criterionname}{Criterion}
  \providecommand{\definitionname}{Definition}
  \providecommand{\examplename}{Example}
  \providecommand{\lemmaname}{Lemma}
  \providecommand{\propositionname}{Proposition}
  \providecommand{\remarkname}{Remark}
\providecommand{\theoremname}{Theorem}
\begin{document}
\psset{unit=0.8}
\newsavebox{\Xalone}
\savebox{\Xalone}{\pscurve(0,0)(1,0)(1.5,-0.2)(2,0)(2.5,0.2)(3,0)}

\newsavebox{\XaloneHole}
\savebox{\XaloneHole}
{\pscurve(0,0)(1,0)(1.2,-0.1)
\pscurve(1.7,-0.1)(2,0)(2.5,0.2)(3,0)
}

\newsavebox{\XtCzero}
\savebox{\XtCzero}{
\rput(0,0){\usebox{\Xalone}}
\rput(2,2){\usebox{\Xalone}}
\psline(3,0)(5,2)
\psline(0,0)(2,2)
\psline[linecolor=gray, linewidth=2px](1.5,-0.2)(3.5,1.8)
\psline[linecolor=gray, linewidth=0.5px](0.5,0)(2.5,2)
\psline[linecolor=gray, linewidth=0.5px](1,0)(3,2)
\psline[linecolor=gray, linewidth=0.5px](2,0)(4,2)
\psline[linecolor=gray, linewidth=0.5px](2.5,0.2)(4.5,2.2)
}

\newsavebox{\Sspace}
\savebox{\Sspace}{
\rput(0,0){\usebox{\XaloneHole}}
\rput(2,2){\usebox{\XaloneHole}}
\pscurve(1.2,-0.1)(1.25,-0.105)(1.3,-0.1)(1.7,0.1)(2.3,0.5)
(2.5,1.4)(3.4,1.8)(3.6,1.85)(3.65,1.87)(3.7,1.9)

\psline[linecolor=gray, linewidth=0.5px](0.5,0)(2.5,2)
\psline[linecolor=gray, linewidth=0.5px](1,0)(3,2)
\psline[linecolor=gray, linewidth=0.5px](2,0)(4,2)
\psline[linecolor=gray, linewidth=0.5px](2.5,0.2)(4.5,2.2)

}

\newsavebox{\Xpt}
\savebox{\Xpt}{
\psline(0,0)(3,0)
\rput(1.5,0){\textbullet}
\rput(1.5,-0.3){{\scriptsize $o$}}
\rput(3.4,0){{\scriptsize $X$}}
}

\newsavebox{\XCzero}
\savebox{\XCzero}{
\psline[linecolor=gray, linewidth=2px](1.5,0)(3.5,2)
\psline[linecolor=gray, linewidth=0.5px](0.5,0)(2.5,2)
\psline[linecolor=gray, linewidth=0.5px](1,0)(3,2)
\psline[linecolor=gray, linewidth=0.5px](2,0)(4,2)
\psline[linecolor=gray, linewidth=0.5px](2.5,0)(4.5,2)
\psline[linecolor=gray](3,0)(5,2)
\psline[linecolor=gray](0,0)(2,2)
\psline(0,0)(3,0)
\psline(2,2)(5,2)
\rput(2.6,0.9){{\scriptsize $C$}}

\rput(5,0.2){{\scriptsize $U=\mathbb{A}^1_k\times C$}}
}

\newsavebox{\Xspace}
\savebox{\Xspace}{
\pscurve(1.2,0)(2.3,0.5)(2.8,1.6)(3.7,2)
\psframe[fillstyle=solid, fillcolor=white, linecolor=white](1.48,0.05)(1.72,0.3)
\psframe[fillstyle=solid, fillcolor=white, linecolor=white](2.4,0.9)(2.7,1.15)
\psframe[fillstyle=solid, fillcolor=white, linecolor=white](3.3,1.8)(3.5,2)
\pscurve(1.7,0)(1.6,0.1) (3.4,1.9)(3.3,2)
\psline[linecolor=gray, linewidth=0.5px](0.5,0)(2.5,2)
\psline[linecolor=gray, linewidth=0.5px](1,0)(3,2)
\psline[linecolor=gray, linewidth=0.5px](2,0)(4,2)
\psline[linecolor=gray, linewidth=0.5px](2.5,0)(4.5,2)
\psline[linecolor=gray](3,0)(5,2)
\psline[linecolor=gray](0,0)(2,2)
\psline(0,0)(1.2,0)
\psline(1.7,0)(3,0)
\psline(5,2)(3.7,2)
\psline(3.3,2)(2,2)

\rput(4,0.2){{\scriptsize $V_m/\mu_m$}}
\rput(2.8,0.9){{\scriptsize $\tilde{C}$}}

}

\newsavebox{\XC}
\savebox{\XC}{
\psline[linewidth=2px](1.5,0)(3.5,2)
\psline[linewidth=0.5px](0.5,0)(2.5,2)
\psline[linewidth=0.5px](1,0)(3,2)
\psline[linewidth=0.5px](2,0)(4,2)
\psline[linewidth=0.5px](2.5,0)(4.5,2)
\psline[linewidth=0.5px](3,0)(5,2)
\psline[linewidth=0.5px](0,0)(2,2)
\psline(0,0)(3,0)
\psline(2,2)(5,2)

\rput(2.65,0.9){{\scriptsize $\tilde{C}$}}
\rput(4.6,0.2){{\scriptsize $\tilde{U}=\mathbb{A}^1_k\times \tilde{C}$}}
}

\newsavebox{\XZ}
\savebox{\XZ}{
\psline(1.3,0)(1.6,0.2)(3.6,2.2)(3.3,2) 
\psline(1.7,0)(1.4,-0.2)(3.4,1.8)(3.7,2)
\psline[linewidth=2px](1.6,0.2)(3.6,2.2)
\psline[linewidth=2px](1.4,-0.2)(3.4,1.8)
\psline[linewidth=0.5px](0.5,0)(2.5,2)
\psline[linewidth=0.5px](1,0)(3,2)
\psline[linewidth=0.5px](2,0)(4,2)
\psline[linewidth=0.5px](2.5,0)(4.5,2)
\psline[linewidth=0.5px](3,0)(5,2)
\psline[linewidth=0.5px](0,0)(2,2)
\psline(0,0)(1.3,0)
\psline(1.7,0)(3,0)
\psline(2,2)(3.3,2)
\psline(3.7,2)(5,2)

\rput(3.6,0.1){{\scriptsize $V_m$}}

}

\author{Adrien Dubouloz}

\address{IMB UMR5584, CNRS, Univ. Bourgogne Franche-Comté, F-21000 Dijon,
France.}

\email{adrien.dubouloz@u-bourgogne.fr}

\thanks{This work received support from ANR Project FIBALGA ANR-18-CE40-0003-01
and the \textquotedbl{}Investissements d'Avenir\textquotedbl{} program,
project ISITE-BFC (contract ANR-lS-IDEX-OOOB). }

\subjclass[2000]{14R05; 14R20; 14L30}

\title{Exotic $\mathbb{G}_{a}$-quotients of $\mathrm{SL}_{2}\times\mathbb{A}^{1}$ }
\begin{abstract}
Every deformed Koras-Russell threefold of the first kind $Y=\left\{ x^{n}z=y^{m}-t^{r}+xh(x,y,t)\right\} $
in $\mathbb{A}^{4}$ is the algebraic quotient of proper Zariski locally
trivial $\mathbb{G}_{a}$-action on $\mathrm{SL}_{2}\times\mathbb{A}^{1}$. 
\end{abstract}

\maketitle

\section*{Introduction}

Deformed Koras-Russell threefolds (of the first kind) were introduced
in \cite{DMP12} as a familly of smooth affine threefolds generalizing
the famous Koras-Russell threefolds (of the first kind) \cite{KaML97,KoRu97}. 
\begin{defn*}
A \emph{deformed Koras-Russell threefold of the first kind} over an
algebraically closed field $k$ of characteristic zero is a smooth
affine threefold $Y$ isomorphic to a hypersurface $Y(m,n,r,h)$ of
$\mathbb{A}_{k}^{4}=\mathrm{Spec}(k[x,y,z,t]$ defined by an equation
of the form 
\[
x^{n}z=y^{m}-t^{r}+xh(x,y,t),
\]
where $n\geq2$, $m,r\geq1$ are coprime integers, and where $h(x,y,t)\in k[x,y,t]$
is a polynomial such that $h(0,0,0)\in k^{*}$. 
\end{defn*}
All these threefolds share the property to come equipped with a flat
fibration $\mathrm{pr}_{x}:Y\rightarrow\mathbb{A}_{k}^{1}$ restricting
to a trivial $\mathbb{A}^{2}$-bundle $(\mathbb{A}_{k}^{1}\setminus\{0\})\times\mathbb{A}_{k}^{2}=\mathrm{Spec}(k[x^{\pm1}][y,t])$
over $\mathbb{A}_{k}^{1}\setminus\{0\}$ and whose fiber over $\{0\}$
is reduced, isomorphic to the product of the irreducible rational
curve $C=\{y^{m}-t^{r}=0\}\subset\mathbb{A}_{k}^{2}$ with $\mathbb{A}_{k}^{1}=\mathrm{Spec}(k[z])$.
If $m$ or $r$ is equal to $1$, then $C\cong\mathbb{A}_{k}^{1}$
and $\mathrm{pr}_{x}:Y\rightarrow\mathbb{A}_{k}^{1}$ is isomorphic
to the trivial $\mathbb{A}_{k}^{2}$-bundle \cite{Sat83}, and hence
$Y$ is isomorphic to the affine space $\mathbb{A}_{k}^{3}$. Otherwise,
if $m,r\geq2$ then $\mathrm{pr}_{x}^{-1}(0)$ is not isomorphic to
$\mathbb{A}_{k}^{2}$ so that so that $Y$ cannot be isomorphic to
$\mathbb{A}_{k}^{3}$ by \cite{Ka02}. In these cases, it is known
more precisely that the fibration $\mathrm{pr}_{x}:Y\rightarrow\mathbb{A}_{k}^{1}$
is invariant under every algebraic action of the additive group $\mathbb{G}_{a,k}$.
This property turns out to fails for the cylinders $Y\times\mathbb{A}_{k}^{\ell}$,
$\ell\geq1$, with the consequence that the known invariants associated
to $\mathbb{G}_{a,k}$-actions do no longer suffice to distinguish
cylinders $Y\times\mathbb{A}_{k}^{\ell}$, $\ell\geq1$, over threefolds
$Y$ non isomorphic to $\mathbb{A}_{k}^{3}$ from affine spaces $\mathbb{A}_{k}^{\ell+3}$.
The question whether any deformed Koras-Russell threefold $Y\not\cong\mathbb{A}_{k}^{3}$
has a cylinder $Y\times\mathbb{A}_{k}^{\ell}$ isomorphic to an affine
space is totally open. 

The projection $\mathrm{pr}_{x,y,t}:Y\rightarrow\mathbb{A}_{k}^{3}$
is a birational morphism which represents $Y$ as the affine modification
of $\mathbb{A}_{k}^{3}=\mathrm{Spec}(k[x,y,t])$ with center as the
closed subscheme $Z_{Y}$ with defining ideal $J_{Y}=(x^{n},y^{m}-t^{r}+xh(x,y,t))$
and principal divisor $D_{Y}=\{x^{n}=0\}$ in the sense of \cite{KaZa99}.
Equivalently, the coordinate ring of $Y$ is isomorphic to the quotient
of the Rees algebra 
\[
\bigoplus_{s\geq0}J_{Y}^{s}\cdot\upsilon^{s}\subset k[x,y,t][\upsilon]
\]
of the ideal $J_{Y}\subset k[x,y,t]$ by the ideal generated by $1-x^{n}\upsilon$.
Over the field $\mathbb{C}$ of complex numbers, the fact that the
associated closed subscheme $Z_{Y}$ is supported on the topologically
contractible curve $\left\{ y^{m}-t^{r}=0\right\} $ inside the topologically
contractible divisor $\mathrm{Supp}(D_{Y})=\mathbb{A}_{\mathbb{C}}^{2}$
implies by \cite[Theorem 3.1]{KaZa99} that every deformed Koras-Russell
threefolds is a topologically contractible complex threefold, which
is actually even diffeomorphic to $\mathbb{R}^{6}$ \cite{KoRu97}.
For arbitrary algebraically closed fields $k$ of characteristic zero,
it is known by \cite{DuFa18} that if $h\in k[x]\setminus xk[x]$
then a deformed Koras-Russell threefold $Y(m,n,r,h)\not\cong\mathbb{A}_{k}^{3}$
is contractible in the $\mathbb{A}^{1}$-homotopy category $\mathcal{H}(k)$
of Morel and Voevodsky \cite{MoVo01}. The question whether every
$Y=Y(m,n,r,h)$ is contractible in this category for arbitrary $h\in k[x,y,t]$
such that $h(0,0,0)\in k^{*}$ is open, but it was established recently
in \cite{DuOP18} by a combination of the techniques developed in
\cite{HKO16} and \cite{DuFa18} that every such $Y$ becomes contractible
in $\mathcal{H}(k)$ after a single suspension with the simplicial
circle $S^{1}$. 

One of the steps in the proofs of these contractibility results in
the $\mathbb{A}^{1}$-homotopy category $\mathcal{H}(k)$ consists
in determining the $\mathbb{A}^{1}$-homotopy type of the complement
in $Y$ of the curve 
\[
\ell=\{x=y=t=0\}\cong\mathrm{Spec}(k[z]).
\]
Over the field of complex number, the inclusion $\ell\hookrightarrow Y$
defines a smooth proper embedding of the underlying differential manifold
$\mathbb{R}^{2}$ of $\ell$ into the underlying differentiable manifold
$\mathbb{R}^{6}$ of $Y$. Since every two smooth proper embeddings
of $\mathbb{R}^{2}$ into $\mathbb{R}^{6}$ are ambiently isotopic
\cite[Chapter 8]{Hi76}, it follows that $Y\setminus\ell$ is diffeomorphic
to the complement of $\mathbb{R}^{2}$ embedded into $\mathbb{R}^{6}$
as a linear subspace, hence to $(\mathbb{R}^{4}\setminus\{0\})\times\mathbb{R}^{2}$.
The $\mathbb{A}^{1}$-homotopic counterpart that $Y\setminus\ell$
is $\mathbb{A}^{1}$-weakly equivalent to the complement $\mathbb{A}_{k}^{3}\setminus\mathbb{A}_{k}^{1}\cong(\mathbb{A}_{k}^{2}\setminus\{0\})\times\mathbb{A}_{k}^{1}$
of an affine line $\mathbb{A}_{k}^{1}$ embedded into $\mathbb{A}_{k}^{3}$
as a linear subspace was established in \cite{DuFa18,DuOP18} by constructing
for every $Y$ an explicit $\mathbb{A}^{1}$-weak equivalence between
$Y\setminus\ell$ and $\mathbb{A}_{k}^{3}\setminus\mathbb{A}_{k}^{1}$
in the form of a quasi-affine fourfold $W$ which is simultaneously
the total space of a $\mathbb{G}_{a,k}$-torsor over $Y\setminus\ell$
and $\mathbb{A}_{k}^{3}\setminus\mathbb{A}_{k}^{1}$. 

Since it was not important in the next steps of the constructions
in \emph{loc. cit.}, the precise structure of this variety $W$ was
not elucidated. But since then, it has become a kind of folklore fact
that it should be affine, and actually isomorphic to the product of
$\mathrm{SL}_{2}=\left\{ xv-yu=1\right\} \subset\mathbb{A}_{k}^{4}$
with the affine line $\mathbb{A}_{k}^{1}$, independently of the given
deformed Koras-Russell threefold $Y$. The purpose of this article
is to give a complete and detailed proof of this folklore fact, in
the form of the the following theorem: 
\begin{thm}
\label{thm:MainTheorem}Let $k$ be an algebraically closed field
of characteristic zero and let 
\[
Y=Y(m,n,r,h)=\left\{ x^{n}z=y^{m}-t^{r}+xh(x,y,t)\right\} \subset\mathbb{A}_{k}^{4}
\]
be a deformed Koras-Russell threefold. Then there exists a proper
Zariski locally trivial $\mathbb{G}_{a,k}$-action on $\mathrm{SL}_{2}\times\mathbb{A}_{k}^{1}$
whose algebraic quotient $(\mathrm{SL}_{2}\times\mathbb{A}_{k}^{1})/\!/\mathbb{G}_{a,k}=\mathrm{Spec}(\Gamma(\mathrm{SL}_{2}\times\mathbb{A}_{k}^{1},\mathcal{O}_{\mathrm{SL}_{2}\times\mathbb{A}_{k}^{1}})^{\mathbb{G}_{a,k}})$
is isomorphic to $Y$. Furthermore, the quotient morphism $\mathrm{SL}_{2}\times\mathbb{A}_{k}^{1}\rightarrow Y$
restricts to a $\mathbb{G}_{a,k}$-torsor over $Y\setminus\ell$. 
\end{thm}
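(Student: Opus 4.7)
The plan is to split the construction into two stages: first, build a scheme $Q$ together with an explicit Zariski locally trivial $\mathbb{G}_{a,k}$-torsor $\mathrm{SL}_{2}\times\mathbb{A}_{k}^{1}\rightarrow Q$; second, show that the affinization of $Q$ equals $Y$, with the induced morphism $Q\rightarrow Y$ being an isomorphism over $Y\setminus\ell$.

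For the torsor construction, I would use the presentation of $Y$ as the affine modification of $\mathbb{A}_{k}^{3}=\mathrm{Spec}(k[x,y,t])$ along $J_{Y}=(x^{n},y^{m}-t^{r}+xh)$ with principal divisor $\{x^{n}=0\}$ recalled in the introduction. A natural candidate for $Q$ arises from a Zariski cover of $Y\setminus\ell$ by affine opens (for instance the three principal opens $\{x\neq0\}$, $\{y\neq0\}$, $\{t\neq0\}$ of $Y$), combined with a ``smooth model'' obtained via the normalization $\tilde{C}=\mathbb{A}_{k}^{1}\rightarrow C$, $\zeta\mapsto(\zeta^{r},\zeta^{m})$, of the cuspidal curve $C=\{y^{m}=t^{r}\}$ equivariantly for the action of the $m$-th roots of unity $\mu_{m}$ (data encoded in the auxiliary figures defined in the preamble). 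Over each piece of the cover every $\mathbb{G}_{a,k}$-torsor is trivial for cohomological reasons, so one can define $\mathrm{SL}_{2}\times\mathbb{A}_{k}^{1}\rightarrow Q$ by gluing trivial torsors via a cocycle dictated by the defining equation $x^{n}z=y^{m}-t^{r}+xh$ of $Y$. The central claim is that the resulting total space is isomorphic, as an affine variety, to $\mathrm{SL}_{2}\times\mathbb{A}_{k}^{1}$: the gluing cocycle, written in appropriate coordinates, takes the form of the single $\mathrm{SL}_{2}$-relation $xv-yu=1$, with the remaining coordinate providing the $\mathbb{A}_{k}^{1}$-factor.

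For the second stage, the induced $\mathbb{G}_{a,k}$-action on $\mathrm{SL}_{2}\times\mathbb{A}_{k}^{1}$ is free and Zariski locally trivial by construction, hence proper. To check that the algebraic quotient is $Y$, one computes the ring of $\mathbb{G}_{a,k}$-invariants: the local pieces of the cover glue to $\mathcal{O}(Y)$ because $\ell$ has codimension $2$ in the smooth threefold $Y$, so any potential non-separatedness of $Q$ over $\ell$ becomes invisible at the level of global sections and one gets $\mathcal{O}(\mathrm{SL}_{2}\times\mathbb{A}_{k}^{1})^{\mathbb{G}_{a,k}}=\mathcal{O}(Q)=\mathcal{O}(Y)$. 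Over $Y\setminus\ell$ the composite $\mathrm{SL}_{2}\times\mathbb{A}_{k}^{1}\rightarrow Q\rightarrow Y$ is a $\mathbb{G}_{a,k}$-torsor because $Q\rightarrow Y$ is an isomorphism there.

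The main obstacle is the explicit identification of the glued torsor with $\mathrm{SL}_{2}\times\mathbb{A}_{k}^{1}$: the cocycle depends intricately on all the parameters $(m,n,r,h)$, and one must verify by direct calculation that it always reduces, after appropriate coordinate changes interlocking the affine modification data with the normalization of the cuspidal curve $C$, to the single $\mathrm{SL}_{2}$-relation. Establishing this uniformly in $(m,n,r,h)$ is the substantive computation at the heart of the argument.
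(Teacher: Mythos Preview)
Your overall architecture --- construct a $\mathbb{G}_{a,k}$-torsor over $Y_*=Y\setminus\ell$ and then recover $Y$ as its affinization via the codimension-two argument --- matches the paper exactly, and the last step (Lemma~12 in the paper) is precisely what you describe. The genuine gap is in the first stage: you propose to identify the total space with $\mathrm{SL}_2\times\mathbb{A}^1_k$ by a direct cocycle computation on a Zariski cover of $Y_*$, and you yourself flag this as ``the main obstacle.'' It is. Nothing in your proposal explains why the cocycle should simplify to the relation $xv-yu=1$ uniformly in $(m,n,r,h)$; as stated this is a hope, not an argument, and the paper's own ``Complements'' section confirms that carrying out such a computation explicitly is tedious and done only case by case.

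The paper bypasses this computation entirely by a Danielewski-type fiber-product trick. It introduces an auxiliary threefold $X_m=\{x^mv-yu=1\}$ with a fixed-point-free $\mathbb{G}_{a,k}$-action and shows two things: (i) $X_m\times\mathbb{A}^1_k\cong\mathrm{SL}_2\times\mathbb{A}^1_k$ for every $m$, and (ii) the algebraic-space quotients $X_m/\mathbb{G}_{a,k}$ and $Y_*/\mathbb{G}_{a,k}$ are \emph{the same} non-separated space $\mathfrak{S}_m$ (obtained from $\mathbb{A}^2_*$ by replacing a punctured line by its $\mu_m$-cover). Then $W_m=Y_*\times_{\mathfrak{S}_m}X_m$ is simultaneously a $\mathbb{G}_{a,k}$-torsor over $Y_*$ and over $X_m$; since $X_m$ is affine the latter torsor is trivial, giving $W_m\cong X_m\times\mathbb{A}^1_k\cong\mathrm{SL}_2\times\mathbb{A}^1_k$ without any cocycle manipulation. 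Your $\mu_m$-equivariant normalization of the cuspidal curve is exactly the data that goes into building $\mathfrak{S}_m$, but you are using it to describe $Q$ rather than to recognize $Q/\mathbb{G}_{a,k}$ as a quotient space shared with another, simpler, variety. That recognition is the missing idea.
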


This implies in particular that the coordinate rings of all Koras-Russell
threefolds of the first kind can be realized as rings of invariants
of $\mathbb{G}_{a,k}$-actions on the single affine fourfold $\mathrm{SL}_{2}\times\mathbb{A}_{k}^{1}$.
In contrast, it is an open question raised by Freudenburg \cite{GSY05}
whether these can be realized as rings of invariants of $\mathbb{G}_{a,k}$-actions
on the affine space $\mathbb{A}_{k}^{4}$. It is also an open question
whether all proper Zariski locally trivial $\mathbb{G}_{a,k}$-actions
on $\mathbb{A}_{k}^{4}$ are conjugate (see e.g. \cite{DuFinJa14, Ka18}
and the references therein for known partial results on this problem).
Since by \cite[Theorem 1]{DMP12}, for every fixed pair $(m,r)$ with
$m,r\geq2$ and for every fixed big enough $n\geq2$, there exists
uncountably many pairwise non isomorphic deformed Koras-Russell threefolds
$Y(m,n,r,h)$, the above theorem implies that this property fails
very badly for $\mathrm{SL}_{2}\times\mathbb{A}^{1}$: 
\begin{cor}
There exists uncountably many pairwise non-conjugate proper Zariski
locally trivial $\mathbb{G}_{a,k}$-actions on $\mathrm{SL}_{2}\times\mathbb{A}^{1}$. 
\end{cor}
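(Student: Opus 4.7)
The plan is to combine the main theorem with the existence result \cite[Theorem 1]{DMP12} cited in the excerpt, and the elementary fact that conjugate actions on a fixed variety induce isomorphic algebraic quotients. By Theorem~\ref{thm:MainTheorem}, every deformed Koras-Russell threefold $Y(m,n,r,h)$ arises as the algebraic quotient of some proper Zariski locally trivial $\mathbb{G}_{a,k}$-action $\sigma_{h}$ on $\mathrm{SL}_{2}\times\mathbb{A}_{k}^{1}$. Fix a pair $(m,r)$ with $m,r\geq2$ and $n\geq 2$ sufficiently large so that \cite[Theorem 1]{DMP12} applies; then the set $\{Y(m,n,r,h)\}_{h}$ contains an uncountable family of pairwise non-isomorphic threefolds. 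The goal is to show that the corresponding family $\{\sigma_{h}\}_{h}$ of $\mathbb{G}_{a,k}$-actions contains an uncountable family of pairwise non-conjugate actions.

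The key reduction is the following. Suppose $\sigma_{1}$ and $\sigma_{2}$ are $\mathbb{G}_{a,k}$-actions on a normal affine variety $X=\mathrm{Spec}(A)$ that are conjugate, meaning there exists an algebraic automorphism $\varphi\colon X\to X$ together with an automorphism $\alpha\in\mathrm{Aut}(\mathbb{G}_{a,k})$ such that $\sigma_{2}(g,\cdot)=\varphi\circ\sigma_{1}(\alpha(g),\cdot)\circ\varphi^{-1}$ for every $g\in\mathbb{G}_{a,k}$. Then the comorphism $\varphi^{*}\colon A\to A$ sends the ring $A^{\sigma_{2}}$ of $\sigma_{2}$-invariants onto the ring $A^{\sigma_{1}}$ of $\sigma_{1}$-invariants, hence induces an isomorphism between the algebraic quotients $X/\!/_{\sigma_{2}}\mathbb{G}_{a,k}$ and $X/\!/_{\sigma_{1}}\mathbb{G}_{a,k}$. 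Applied to $X=\mathrm{SL}_{2}\times\mathbb{A}_{k}^{1}$ and the actions $\sigma_{h}$ provided by Theorem~\ref{thm:MainTheorem}, this shows that conjugacy of $\sigma_{h}$ and $\sigma_{h'}$ forces $Y(m,n,r,h)\cong Y(m,n,r,h')$.

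The contrapositive immediately gives the corollary: to each isomorphism class of deformed Koras-Russell threefolds $Y(m,n,r,h)$ in the uncountable family provided by \cite[Theorem 1]{DMP12} there corresponds a distinct conjugacy class of proper Zariski locally trivial $\mathbb{G}_{a,k}$-actions on $\mathrm{SL}_{2}\times\mathbb{A}_{k}^{1}$. There is no real obstacle here: the proof is a one-line bookkeeping argument once Theorem~\ref{thm:MainTheorem} is established. The only point that deserves a brief comment is the use of an arbitrary $\alpha\in\mathrm{Aut}(\mathbb{G}_{a,k})=k^{*}$ in the definition of conjugacy, which does not affect the invariant ring because $A^{\sigma_{1}}$ depends only on the underlying foliation, not on the parametrization of orbits.
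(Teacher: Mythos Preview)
Your proof is correct and follows exactly the approach the paper takes: the corollary is stated there as an immediate consequence of Theorem~\ref{thm:MainTheorem} together with \cite[Theorem 1]{DMP12}, with the implicit use of the elementary fact that conjugate actions have isomorphic algebraic quotients. You have simply made this last step explicit, which is fine.
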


The proof of Theorem \ref{thm:MainTheorem} draws as in \cite{DuFa18,DuOP18}
on the study of categorical quotients of certain $\mathbb{G}_{a,k}$-actions
on deformed Koras-Russell threefolds in the category of algebraic
spaces. The algebraic spaces which come into play are a particular
class of ``non-separated surfaces with an $m$-fold curve'' which
already appeared in the context of the study of proper $\mathbb{G}_{a,k}$-actions
on $\mathbb{A}_{k}^{4}$ in \cite{DuFin14,DuFinJa14} and, for some
special cases, in \cite{Du09} and \cite{DMP11} in relation to the
Zariski Cancellation problem for threefolds. In many respects, these
spaces tend to be natural and necessary replacements in higher dimension
of the non-separated curves first considered by Danielewski \cite{Dan89}
in its famous counter-example to the Cancellation problem in dimension
two, and which became ubiquitous in the study $\mathbb{A}^{1}$-fibered
affine surfaces after the work of Fieseler \cite{Fie94}. With the
hope to make the use of these techniques accessible to a larger community,
we collect various complementary descriptions of these spaces which
can be found disseminated in the literature. 

\section{\label{sec:QuotientSpaces-Exotic} preliminaries: Some non proper
$\mathbb{G}_{a}$-actions on exotic affine $3$-spheres}

Let $m\geq1$ be an integer and let $X_{m}\subset\mathbb{A}_{k}^{4}=\mathrm{Spec}(k[x,y,u,v])$
be the smooth closed sub-variety of dimension $3$ defined by the
equation $x^{m}v-yu=1$. The projection 
\[
\mathrm{pr}_{x,y}:X_{m}\rightarrow\mathbb{A}_{*}^{2}=\mathbb{A}_{k}^{2}\setminus\{(0,0)\}
\]
is a Zariski locally trivial $\mathbb{A}^{1}$-bundle which is the
structure morphism of a Zariski locally trivial $\mathbb{G}_{a,k}$-torsor
for the $\mathbb{G}_{a,k}$-action on $X_{m}$ defined by $t\cdot(x,y,u,v)=(x,y,u+tx^{m},v+ty)$.
For every $m,m'\geq1$, the fiber product $W=X_{m}\times_{\mathbb{A}_{*}^{2}}X_{m'}$
is thus simultaneously the total space of a $\mathbb{G}_{a,k}$-torsor
over $X_{m}$ and $X_{m'}$ via the first and second projection respectively.
Since $X_{m}$ and $X_{m'}$ are affine, the vanishing of $H^{1}(X_{m},\mathcal{O}_{X_{m}})$
and $H^{1}(X_{m'},\mathcal{O}_{X_{m'}})$ implies that these two $\mathbb{G}_{a,k}$-torsors
are trivial so that we get isomorphisms 
\begin{equation}
X_{m}\times\mathbb{A}_{k}^{1}\cong W\cong X_{m'}\times\mathbb{A}_{k}^{1}.\label{eq:Xm-SL2-cylinders}
\end{equation}
In particular, for every $m\geq1$, $X_{m}\times\mathbb{A}_{k}^{1}$
is isomorphic to $X_{1}\times\mathbb{A}_{k}^{1}\cong\mathrm{SL}_{2}\times\mathbb{A}_{k}^{1}$. 
\begin{rem}
Over the field of complex numbers $\mathbb{C}$, the underlying $6$-dimensional
real smooth manifold $X_{m}^{\mathrm{diff}}$ of $X_{m}$ is diffeomorphic
to that of $X_{1}$ for every $m\geq1.$ Moreover, $X_{1}^{\mathrm{diff}}$
is homotopically equivalent to the sphere $S^{3}\subset\mathbb{R}^{4}$.
It was established in \cite{DuFin14-S} that for every $m>1$, $X_{m}$
is not isomorphic to $X_{1}$ as an algebraic variety. The threefolds
$X_{m}$, $m>1$, were consequently named exotic affine $3$-spheres.
\end{rem}

Each threefold $X_{m}$ carries another fixed point free $\mathbb{G}_{a,k}$-action
$\nu_{m}:\mathbb{G}_{a,k}\times X_{m}\rightarrow X_{m}$ defined by
the locally nilpotent $k[y,v]$-derivation 
\[
\partial_{m}=y\frac{\partial}{\partial x}+mx^{m-1}v\frac{\partial}{\partial u}
\]
of its coordinate ring $A_{m}=k[x,y,u,v]/(x^{m}v-yu-1)$. The projection
\[
q_{m}=\mathrm{pr}_{y,v}:X_{m}\rightarrow\mathbb{A}_{*}^{2}=\mathrm{Spec}(k[y,v])\setminus\{(0,0)\}
\]
is a smooth $\mathbb{G}_{a,k}$-invariant morphism which restricts
to the trivial $\mathbb{G}_{a,k}$-torsor over the principal affine
open subset $\mathbb{A}_{k,y}^{2}=\mathrm{Spec}(k[y^{\pm1},v))$ of
$\mathbb{A}_{*}^{2}$.

\subsection{Categorical quotients in the category of schemes }

If $m=1$ then $q_{1}:X_{1}\rightarrow\mathbb{A}_{*}^{2}$ is again
a Zariski locally trivial $\mathbb{G}_{a,k}$-torsor. This is no longer
the case when $m>1$ since then the restriction of $q_{m}$ over the
curve $C\cong\mathrm{Spec}(k[v^{\pm1}])$ in $\mathbb{A}_{*}^{2}$
with equation $y=0$ factors as the composition of the trivial $\mathbb{G}_{a,k}$-torsor
\[
\mathrm{pr}_{x}:X_{m}|_{C}\cong\mathrm{Spec}(k[x,u,v]/(x^{m}v-1))\rightarrow\tilde{C}=\mathrm{Spec}(k[x^{\pm1}])
\]
with the cyclic \'etale cover $f:\tilde{C}\rightarrow C$, $x\mapsto v=x^{-m}$
of order $m$. 
\begin{lem}
\label{lem:CatQuot-Schemes}For every $m\geq1$, the morphism $q_{m}:X_{m}\rightarrow\mathbb{A}_{*}^{2}$
is the categorical quotient of $X_{m}$ by the $\mathbb{G}_{a,k}$-action
$\nu_{m}$ in the category of schemes. 
\end{lem}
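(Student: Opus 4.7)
The plan is to verify invariance of $q_m$ and then establish the universal property by computing the ring of invariants on the affine cover $\mathbb{A}_{*}^{2}=\{y\neq 0\}\cup\{v\neq 0\}$ and gluing the resulting local quotients. Invariance of $q_m$ follows at once from $\partial_m y=\partial_m v=0$, while surjectivity of $q_m$ onto $\mathbb{A}_{*}^{2}$ follows from the defining relation $x^{m}v-yu=1$, which precludes $(y,v)=(0,0)$; the latter surjectivity will in particular give the uniqueness of any factorization through $q_m$.

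On $\{y\neq 0\}$, solving $u=(x^{m}v-1)/y$ identifies $A_{m}[y^{-1}]$ with the polynomial ring $k[y^{\pm1},v,x]$, on which $\partial_m$ reduces to the translation derivation $y\,\partial/\partial x$. Hence $q_{m}^{-1}(\{y\neq 0\})\to\{y\neq 0\}$ is a trivial $\mathbb{G}_{a,k}$-torsor with invariant ring $k[y^{\pm 1},v]$. On $\{v\neq 0\}$, the relation $x^{m}=(yu+1)/v$ exhibits $A_{m}[v^{-1}]$ as a free $k[y,u,v^{\pm 1}]$-module with basis $1,x,\ldots,x^{m-1}$. The plan here is to expand an invariant element $f=\sum_{i=0}^{m-1}g_{i}(y,u,v)\,x^{i}$, compute $\partial_m f$ and reduce modulo $x^{m}v-yu-1$: the coefficient of $x^{m-1}$ will force $\partial_u g_0=0$, while each other coefficient yields an ODE of the form $m(yu+1)\partial_u g_{j+1}+(j+1)y\, g_{j+1}=0$ whose analysis in the top $u$-degree forces $g_{j+1}=0$. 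This gives $(A_{m}[v^{-1}])^{\partial_m}=k[y,v^{\pm 1}]=\mathcal{O}(\{v\neq 0\})$.

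Together these two computations yield an isomorphism $q_m^{*}:\mathcal{O}_{\mathbb{A}_*^{2}}\xrightarrow{\sim}(q_m)_*\mathcal{O}_{X_m}^{\mathbb{G}_{a,k}}$, from which the factorization $\psi:\mathbb{A}_*^{2}\to Z$ of an invariant morphism $\phi:X_m\to Z$ can be constructed locally (torsor argument on $\{y\neq 0\}$, ring-of-invariants argument on $\{v\neq 0\}$) and glued on the overlap. The main obstacle will be this factorization over $\{v\neq 0\}$ for $Z$ not affine: the fibers of $q_m$ over points of the curve $C=\{y=0\}\subset\{v\neq 0\}$ consist of $m$ disjoint $\mathbb{G}_{a,k}$-orbits, and a priori $\phi$ descends only to the non-separated algebraic-space quotient obtained by replacing $C$ with the étale cover $\tilde{C}\to C$. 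The resolution will exploit the irreducibility of $\tilde{C}\cong\mathbb{G}_{m,k}$: restricting $\phi$ to the closed slice $\{u=0\}\cap q_m^{-1}(\{v\neq 0\})\cong\tilde{C}\times\mathbb{A}_k^{1}$, the invariant ring computation shows that $\phi$ is $\mu_m$-invariant on the dense open $\{y\neq 0\}$, and this invariance will propagate to the whole slice by irreducibility, forcing $\phi$ to descend to the honest scheme $\{v\neq 0\}$.
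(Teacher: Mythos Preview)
Your approach is correct and takes a genuinely different route from the paper's. Both proofs agree on the easy half: $q_m$ restricts to a trivial $\mathbb{G}_{a,k}$-torsor over $\{y\neq 0\}$, so the only issue is showing that an invariant morphism $\phi:X_m\to Z$ is constant on the $q_m$-fibers over the curve $C=\{y=0\}$, where each fiber breaks into $m$ orbits. The paper handles this by choosing an affine open $U\subset Z$ meeting $\phi(q_m^{-1}(C))$, observing that $\phi^{-1}(U)$ is an affine $\mathbb{G}_{a,k}$-invariant open of $X_m$, and then running a codimension-one argument to show that $\phi^{-1}(U)$ swallows an entire tube $q_m^{-1}(V_0)$ for some principal open $V_0$ meeting $C$; the affine categorical quotient of that tube then forces $\phi$ to be constant on fibers over $V_0\cap C$, hence (by irreducibility of $q_m^{-1}(C)$) everywhere over $C$. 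Your argument instead passes to the concrete slice $\{u=0\}\cong\tilde U=\mathrm{Spec}(k[x^{\pm1},y])$, on which the residual ambiguity is exactly the $\mu_m$-action $x\mapsto\varepsilon x$, and propagates $\mu_m$-invariance of $\phi|_{\tilde U}$ from the dense open $\{y\neq 0\}$ to all of $\tilde U$ by the equalizer argument (two morphisms from an integral scheme to a separated target agreeing on a dense open agree everywhere). Your route is more hands-on and has the pleasant feature that the slice $\tilde U$ is precisely the \'etale chart used later in the paper to build $\mathfrak{S}_m$; the paper's route avoids choosing a slice and is closer in spirit to a general ``geometric quotient'' argument. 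Note that both arguments tacitly need $Z$ separated at the crucial step: the paper's claim that $\phi$ is an affine morphism (hence $\phi^{-1}(U)$ affine) uses separatedness of $Z$, and your density/equalizer step does too. Your explicit computation of $(A_m[v^{-1}])^{\partial_m}=k[y,v^{\pm1}]$ is correct and more detailed than the paper, which simply asserts $\mathrm{Ker}\,\partial_m=k[y,v]$ without proof; however, for the factorization you ultimately rely on fppf descent (constancy on fibers) rather than on this ring computation, so you could streamline by dropping it.
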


\begin{proof}
Since the ring of $\mathbb{G}_{a,k}$-invariant functions on $X_{m}$
is equal to sub-algebra $k[y,v]\subset A_{m}$, it follows that the
composition of $q_{m}:X_{m}\rightarrow\mathbb{A}_{*}^{2}$ with the
open inclusion $\mathbb{A}_{*}^{2}\hookrightarrow\mathrm{Spec}(k[y,v])$
is the categorical quotient $X_{m}\rightarrow X_{m}/\!/\mathbb{G}_{a,k}$
of $X_{m}$ in the category of affine schemes. Furthermore, for every
principal affine open subset $V$ of $\mathbb{A}_{k}^{2}$, the composition
\[
q_{m}:q_{m}^{-1}(V_{0})\rightarrow V_{0}=V\cap\mathbb{A}_{*}^{2}\hookrightarrow V
\]
is the categorical quotient in the category of affine schemes of $q_{m}^{-1}(V_{0})$
by the $\mathbb{G}_{a,k}$-action induced by $\nu_{m}$ .

Now let $Z$ be an arbitrary scheme and let $f:X_{m}\rightarrow Z$
be a $\mathbb{G}_{a,k}$-invariant morphism. Since $X_{m}$ is irreducible,
to show that $f:X_{m}\rightarrow Z$ factorizes as $\tilde{f}\circ q_{m}$
for a unique morphism $\tilde{f}:\mathbb{A}_{*}^{2}\rightarrow Z$,
we may assume without loss of generally that $Z$ is irreducible.
Since $q_{m}:X_{m}\rightarrow\mathbb{A}_{*}^{2}$ is a surjective
smooth morphism, hence in particular a faithfully flat morphism, it
follows from faithfully flat descent that $f$ descends to a morphism
$\tilde{f}:\mathbb{A}_{*}^{2}\rightarrow Z$ if and only if it is
constant on the fibers of $q_{m}$. This is clear for $m=1$ as $q_{1}:X_{1}\rightarrow\mathbb{A}_{*}^{2}$
is a $\mathbb{G}_{a,k}$-torsor. We now consider the case $m\geq2$.
Since the restriction of $q_{m}:X_{m}\rightarrow\mathbb{A}_{*}^{2}$
over the principal affine open subset $\mathbb{A}_{k,y}^{2}=\mathbb{A}_{*}^{2}\setminus C$
of $\mathbb{A}_{*}^{2}$ is a $\mathbb{G}_{a,k}$-torsor, $f$ is
constant on the fibers of $q_{m}|_{X_{m}\setminus q_{m}^{-1}(C)}$,
and it remains to check that $f$ is constant on the fibers of $q_{m}|_{q_{m}^{-1}(C)}:q_{m}^{-1}(C)\rightarrow C$.
Since $\mathbb{G}_{a,k}$ acts on $q_{m}^{-1}(C)\cong\tilde{C}\times\mathbb{A}_{k}^{1}$
by translations on the second factor and $f$ is $\mathbb{G}_{a,k}$-invariant,
the image of $q_{m}^{-1}(C)$ by $f$ is either a point and we are
done, or its closure in $Z$ is a curve $D$ dominated by $\tilde{C}$. 

In the second case, let $U\subset Z$ be an affine open subset such
that $D\cap U$ is not empty. Since $X_{m}$ is affine, $f$ is an
affine morphism. It follows that $f^{-1}(U)$ is a $\mathbb{G}_{a,k}$-invariant
affine open subset of $X_{m}$ such that $f^{-1}(U)\cap q_{m}^{-1}(C)$
is not empty. Since $q_{m}$ is smooth, hence open, $q_{m}(f^{-1}(U))$
is an open subset of $\mathbb{A}_{*}^{2}$ such that $q_{m}(f^{-1}(U))\cap C$
is not empty. Let $V_{0}$ be a principal affine open subset of $\mathbb{A}_{*}^{2}$
contained in $q_{m}(f^{-1}(U))$ and intersecting $C$. Then $q_{m}^{-1}(V_{0})$
is contained in $f^{-1}(U)$. Indeed, first note that by the choice
of $V_{0}$, $q_{m}^{-1}(V_{0}\cap C)\cap f^{-1}(U)$ is not empty.
Since $q_{m}^{-1}(V_{0})$ is affine and $X_{m}$ is separated, $q_{m}^{-1}(V_{0})\cap f^{-1}(U)$
is an affine open subset of $q_{m}^{-1}(V_{0})$. It follows that
$q_{m}^{-1}(V_{0})\setminus(q_{m}^{-1}(V_{0})\cap f^{-1}(U))$ is
either empty or a closed subset of pure codimension one in $q_{m}^{-1}(V_{0})$.
On the other hand, since $f$ is constant on the fibers of $q_{m}|_{X_{m}\setminus q_{m}^{-1}(C)}$,
we have $q_{m}^{-1}(V_{0}\setminus C)=q_{m}^{-1}(V_{0})\setminus q_{m}^{-1}(C)=f^{-1}(U)\setminus q_{m}^{-1}(C)$.
So $q_{m}^{-1}(V_{0})\setminus(q_{m}^{-1}(V_{0})\cap f^{-1}(U))$
is contained in $q_{m}^{-1}(V_{0}\cap C)$. Since $q_{m}^{-1}(V_{0}\cap C)$
is irreducible and of pure codimension one in $q_{m}^{-1}(V_{0})$
and $q_{m}^{-1}(V_{0}\cap C)\cap f^{-1}(U)\neq\emptyset$, it follows
that $q_{m}^{-1}(V_{0})\setminus(q_{m}^{-1}(V_{0})\cap f^{-1}(U))$
is empty. Since $U$ is affine and $V_{0}$ is the categorical quotient
of $q_{m}^{-1}(V_{0})\subseteq f^{-1}(U)$ in the category of affine
schemes, it follows that there exists a unique morphism $\tilde{f}:V_{0}\rightarrow U$
such that $f|_{q_{m}^{-1}(V_{0})}=\tilde{f}\circ q_{m}|_{q_{m}^{-1}(V_{0})}$.
This implies that $f$ is generically constant, hence constant, on
the fibers of $q_{m}|_{q_{m}^{-1}(C)}:q_{m}^{-1}(C)\rightarrow C$
as desired. 
\end{proof}

\subsection{\label{subsec:AlgSpace-Quotient}Categorical quotients in the category
of algebraic spaces}

On the other hand, since the $\mathbb{G}_{a,k}$-action $\nu_{m}$
on $X_{m}$ is fixed point free, it admits a categorical quotient
in the larger category of algebraic spaces, in the form of an \'etale
locally trivial $\mathbb{G}_{a,k}$-torsor $\rho_{m}:X_{m}\rightarrow X_{m}/\mathbb{G}_{a,k}$
over a certain algebraic space $X_{m}/\mathbb{G}_{a,k}$ of finite
type and dimension $2$ (see e.g. \cite[10.4]{LMB00}), which is smooth
as $X_{m}$ is smooth. If $m=1$ then $X_{1}/\mathbb{G}_{a,k}=\mathbb{A}_{*}^{2}$.
But if $m\geq2$, it follows from Lemma \ref{lem:CatQuot-Schemes}
that $X_{m}/\mathbb{G}_{a,k}$ cannot be a scheme. Furthermore, $X_{m}/\mathbb{G}_{a,k}$
is not separated for otherwise, being smooth of dimension $2$ and
of finite type over $k$, it would be a quasi-projective $k$-variety
by Chow Lemma. Since $\rho_{m}:X_{m}\rightarrow X_{m}/\mathbb{G}_{a,k}$
is a $\mathbb{G}_{a,k}$-torsor, this implies that for every $m\geq2$
the injective morphism 
\[
\nu_{m}\times\mathrm{pr}_{2}:\mathbb{G}_{a,k}\times X_{m}\cong X_{m}\times_{X_{m}/\mathbb{G}_{a,k}}X_{m}\rightarrow X_{m}\times X_{m}
\]
is not a closed immersion, hence that the action $\nu_{m}$ is not
proper.

Since $\rho_{m}:X_{m}\rightarrow X_{m}/\mathbb{G}_{a,k}$ is a categorical
quotient in the category of algebraic spaces, the surjective morphism
$q_{m}:X_{m}\rightarrow\mathbb{A}_{*}^{2}$ factors as $q_{m}=\tilde{q}_{m}\circ\rho_{m}$
for a unique surjective morphism $\tilde{q}_{m}:X_{m}/\mathbb{G}_{a,k}\rightarrow\mathbb{A}_{*}^{2}$.
Since the restriction of $q_{m}$ over $\mathbb{A}_{k,y}^{2}=\mathbb{A}_{*}^{2}\setminus C$
is already a $\mathbb{G}_{a,k}$-torsor, $\tilde{q}_{m}$ restricts
to an isomorphism over $\mathbb{A}_{*}^{2}\setminus C$. On the other
hand, since $q_{m}^{-1}(C)\cong\tilde{C}\times\mathbb{A}_{k}^{1}$
on which $\mathbb{G}_{a,k}$ acts by translations on the second factor,
it follows that $\tilde{q}_{m}^{-1}(C)\cong(\tilde{C}\times\mathbb{A}_{k}^{1})/\mathbb{G}_{a,k}\cong\tilde{C}$.
So $X_{m}/\mathbb{G}_{a,k}$ is somehow obtained from $\mathbb{A}_{*}^{2}$
by replacing the closed curve $C=\{y=0\}$ by the total space of the
cyclic \'etale cover $f:\tilde{C}\rightarrow C$, $x\mapsto v=x^{-m}$
of order $m$. 

Let us recall from \cite[§ 1.1]{DuFin14} an explicit construction
of an algebraic space $\mathfrak{S}_{m}$ with this property. Let
$U=\mathbb{A}_{k}^{1}\times C=\mathrm{Spec}(k[y,v^{\pm1}])\subset\mathbb{A}_{*}^{2}$,
$U_{*}=U\setminus(\{0\}\times C)=\mathrm{Spec}(k[y^{\pm1},v^{\pm1}]$,
$\tilde{U}=\mathbb{A}_{k}^{1}\times\tilde{C}=\mathrm{Spec}(k[y,x^{\pm1}])$
and let 
\[
\varphi=\mathrm{id}\times f:\tilde{U}=\mathbb{A}_{k}^{1}\times\tilde{C}\rightarrow\mathbb{A}_{k}^{1}\times C=U
\]
be the \'etale morphism deduced from $f:\tilde{C}\rightarrow C$.
Let $\mathrm{diag}:\tilde{U}\hookrightarrow\tilde{U}\times\tilde{U}$
be the diagonal embedding and let $j:(\tilde{U}\times_{U_{*}}\tilde{U})\setminus\mathrm{Diag}\hookrightarrow\tilde{U}\times\tilde{U}$
be the natural immersion. Then the pair of morphisms 
\begin{equation}
(\mathrm{pr}_{1}\circ(\mathrm{diag}\sqcup j),\mathrm{pr}_{2}\circ(\mathrm{diag}\sqcup j)):R=\tilde{U}\sqcup(\tilde{U}\times_{U_{*}}\tilde{U})\setminus\mathrm{Diag}\rightrightarrows\tilde{U}\label{eq:Etale-equiv-rel}
\end{equation}
is an \'etale equivalence relation on $\tilde{U}$. Letting $\tilde{U}/R$
be the algebraic space defined by this \'etale equivalence relation,
it follows that $\varphi:\tilde{U}\rightarrow U$ descends to a morphism
$\beta:\tilde{U}/R\rightarrow U$. By construction, the restriction
$R_{*}$ of $R$ to $\varphi^{-1}(U_{*})=\tilde{U}\setminus(\{0\}\times\tilde{C})$
is equal to the equivalence relation defined by the diagonal embedding
$\varphi^{-1}(U_{*})\hookrightarrow\varphi^{-1}(U_{*})\times_{U_{*}}\varphi^{-1}(U_{*})$,
whose quotient $\varphi^{-1}(U_{*})/R_{*}$ is isomorphic to $U_{*}$.
It follows that $\beta:\tilde{U}/R\rightarrow U$ restricts to an
isomorphism over $U_{*}$. On the other hand, since $R$ restricts
to the trivial equivalence relation on the closed subset $\{0\}\times\tilde{C}\subset\tilde{U}$,
it follows that $\beta^{-1}(\{0\}\times C)\cong\{0\}\times\tilde{C}$. 

Now we let $\mathfrak{S}_{m}$ be the algebraic space obtained by
gluing $\mathbb{A}_{k,y}^{2}=\mathrm{Spec}(k[y^{\pm1},v]$ and $\tilde{U}/R$
along the open subsets $U_{*}=$$\mathrm{Spec}(k[y^{\pm1},v^{\pm1}]$
and $\beta^{-1}(U_{*})$ by the isomorphism $\beta^{-1}(U_{*})\cong U_{*}$
induced by $\beta$. Then there exists a unique morphism $\delta_{m}:\mathfrak{S}_{m}\rightarrow\mathbb{A}_{*}^{2}$
whose restrictions to the corresponding open subsets $\mathbb{A}_{k,y}^{2}$
and $\tilde{U}/R$ of $\mathfrak{S}_{m}$ are equal to the open inclusion
$\mathbb{A}_{k,y}^{2}\hookrightarrow\mathbb{A}_{*}^{2}$ and the composition
of $\beta$ with the open inclusion $U=\mathbb{A}_{k,v}^{2}\hookrightarrow\mathbb{A}_{*}^{2}$
respectively. 
\begin{prop}
\label{prop:CatQuotientSpace-Xm}For every $m\geq1$, $\tilde{q}_{m}:X_{m}/\mathbb{G}_{a,k}\rightarrow\mathbb{A}_{*}^{2}$
and $\delta_{m}:\mathfrak{S}_{m}\rightarrow\mathbb{A}_{*}^{2}$ are
isomorphic algebraic spaces over $\mathbb{A}_{*}^{2}$. 
\end{prop}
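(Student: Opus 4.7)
The plan is to identify the two algebraic spaces $X_{m}/\mathbb{G}_{a,k}$ and $\mathfrak{S}_{m}$ over the Zariski open cover $\{\mathbb{A}_{k,y}^{2}, U\}$ of $\mathbb{A}_{*}^{2}$ and glue. Over $\mathbb{A}_{k,y}^{2}$, where $y$ is invertible, the defining equation $x^{m}v - yu = 1$ allows one to eliminate $u$ and identifies $X_{m}|_{\mathbb{A}_{k,y}^{2}}$ with the trivial $\mathbb{G}_{a,k}$-torsor $\mathbb{A}_{k,y}^{2} \times \mathrm{Spec}(k[x])$, since $\partial_{m}(x) = y$ is invertible. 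Hence $\tilde{q}_{m}$ restricts to an isomorphism over $\mathbb{A}_{k,y}^{2}$; by the construction of $\mathfrak{S}_{m}$ so does $\delta_{m}$, and the two spaces are canonically identified over this piece.

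Over $U$, the plan is to exhibit $X_{m}|_{U}$ as an \'etale locally trivial $\mathbb{G}_{a,k}$-torsor $\psi: X_{m}|_{U} \to \tilde{U}/R$; the universal property of the categorical quotient in algebraic spaces will then provide the canonical isomorphism $X_{m}|_{U}/\mathbb{G}_{a,k} \cong \tilde{U}/R$. Using the \'etale presentation $\tilde{U} \to \tilde{U}/R$, constructing $\psi$ amounts to producing an \'etale surjection $W \to X_{m}|_{U}$ equipped with a $\mathbb{G}_{a,k}$-invariant morphism $W \to \tilde{U}$ such that (i) $W \to \tilde{U}$ is a trivial $\mathbb{G}_{a,k}$-torsor, and (ii) the pair of projections $W \times_{X_{m}|_{U}} W \rightrightarrows \tilde{U}$ factors through $R$. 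The natural candidate for $W$ is an open subscheme of the pullback $X_{m}|_{U} \times_{U} \tilde{U}$: substituting $v = x^{-m}$ and letting $w$ denote the ratio of the two ``$x$'' coordinates presents this pullback as $\mathrm{Spec}(k[y, x^{\pm 1}, w, u]/(w^{m} - yu - 1))$, on which $\partial_{m}$ induces $\partial w = y/x$ and $\partial u = m w^{m-1}/x$; one then takes $W$ to be the complement of the $m-1$ closed subschemes $\{y = 0, w = \zeta\}$ for $\zeta \in \mu_{m} \setminus \{1\}$, i.e.\ the non-diagonal branches of the pulled-back $\mu_{m}$-cover sitting above $C \subset U$.

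The main obstacle will be verifying (i) and (ii). For (ii), a direct computation shows that $W \times_{X_{m}|_{U}} W$ decomposes as a copy of $W$ (the diagonal) together with $m-1$ copies of $W \cap \{y \neq 0\}$ indexed by $\mu_{m} \setminus \{1\}$; the excision defining $W$ is precisely what keeps these non-diagonal components over $U_{*}$, matching the decomposition $R = \mathrm{diag}(\tilde{U}) \sqcup ((\tilde{U} \times_{U_{*}} \tilde{U}) \setminus \mathrm{Diag})$ of the equivalence relation. For (i), the morphism $\sigma: \tilde{U} \to W$, $(y, x) \mapsto (y, x, 1, 0)$, is a well-defined section (the value $w = 1$ avoids the excised locus), and the $\mathbb{G}_{a,k}$-orbit of $\sigma$ sweeps each fiber of $W \to \tilde{U}$: over $\{y \neq 0\}$ because $\partial w = y/x \neq 0$, and over $\{y = 0, w = 1\}$ because $\partial u = m/x \neq 0$. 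This yields an equivariant isomorphism $W \cong \tilde{U} \times \mathbb{A}_{k}^{1}$, and \'etale descent along $W \to X_{m}|_{U}$ produces the desired torsor $\psi$. Gluing the identifications over $\mathbb{A}_{k,y}^{2}$ and $U$ on the overlap $U_{*} = \mathbb{A}_{k,y}^{2} \cap U$, on which both restrict to the identity on $U_{*}$, completes the proof of $X_{m}/\mathbb{G}_{a,k} \cong \mathfrak{S}_{m}$ over $\mathbb{A}_{*}^{2}$.
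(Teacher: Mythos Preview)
Your plan is correct and follows the same overall strategy as the paper: cover $\mathbb{A}_{*}^{2}$ by $\mathbb{A}_{k,y}^{2}$ and $U$, identify both spaces with $\mathbb{A}_{k,y}^{2}$ over the first piece, and over $U$ produce an \'etale trivialization of the $\mathbb{G}_{a,k}$-action on $X_{m}|_{U}$ with atlas $\tilde{U}$ whose induced equivalence relation coincides with $R$, then descend. The technical difference lies in how the trivialization is built. The paper writes down an explicit orbit map $\Phi:\tilde{U}\times\mathbb{G}_{a,k}\to X_{m}|_{U}$, $((x,y),t)\mapsto\nu_{m}(t,(x,y,0,x^{-m}))$, verifies \'etaleness by a Jacobian computation, and then decomposes the coordinate ring of the self-fiber-product by hand into the pieces $B_{0}$ and $B_{1}$. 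You instead realize the same \'etale cover as the open subset $W\subset X_{m}|_{U}\times_{U}\tilde{U}$ obtained by deleting the non-diagonal branches over $\{y=0\}$; \'etaleness is then automatic, the section $\sigma$ gives the trivialization, and the decomposition of $W\times_{X_{m}|_{U}}W$ follows cleanly from the $\mu_{m}$-Galois structure of $\tilde{U}\to U$. Your variant is precisely the mechanism the paper later packages as Criterion~\ref{cri:Criterion-descent} via Lemma~\ref{lem:QuotientSpace-GaloisAction} and applies in Proposition~\ref{prop:Xmnr-ALgQuotient}; it trades the paper's explicit polynomial identities for a more structural argument, at the cost of having to check that the excision keeps $W\to X_{m}|_{U}$ surjective (which it does, since over $\{y=0\}$ exactly the sheet $w=1$ survives).
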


\begin{proof}
By construction, $\mathfrak{S}_{1}$ is isomorphic to $\mathbb{A}_{*}^{2}$.
If $m\geq2$, then since $\mathbb{A}_{*}^{2}$ is covered by the principal
affine open subsets $\mathbb{A}_{k,y}^{2}$ and $U=\mathbb{A}_{k,v}^{2}$,
it suffices to show that there exists local isomorphism $\delta_{m}^{-1}(\mathbb{A}_{k,y}^{2})\cong\tilde{q}_{m}^{-1}(\mathbb{A}_{k,y}^{2})$
and $\tilde{q}_{m}^{-1}(U)\cong\delta_{m}^{-1}(U)$ which coincide
over $\mathbb{A}_{k,y}^{2}\cap U=U_{*}$. By construction, we already
have isomorphisms $\delta_{m}^{-1}(\mathbb{A}_{k,y}^{2})\cong\mathbb{A}_{k,y}^{2}\cong\tilde{q}_{m}^{-1}(X_{m}/\mathbb{G}_{a,k})$
as schemes over $\mathbb{A}_{k,y}^{2}$. It remains to construct a
compatible isomorphism $\tilde{q}_{m}^{-1}(U)\cong\delta_{m}^{-1}(U)=\tilde{U}/R$
of algebraic spaces over $U=\mathbb{A}_{k,v}^{2}$. Consider the morphism
\[
\begin{array}{ccc}
\Phi:\tilde{U}\times\mathbb{G}_{a,k}=\mathrm{Spec}(k[x^{\pm1},y])\times\mathbb{G}_{a,k} & \longrightarrow & X_{m}|_{U}\\
((x,y),t) & \mapsto & \nu_{m}(t,(x,y,0,x^{-m}))=(x+ty,y,P(x,y,t),x^{-m})
\end{array}
\]
where 
\[
P(x,y,t)=\sum_{n\geq1}\frac{\partial_{m}^{n}u}{n!}|_{v=x^{-m}}t^{n}=\sum_{n=1}^{m}\frac{m!}{(m-n)!n!}x^{-n}y^{n-1}t^{n}=\sum_{n=1}^{m}\binom{m}{n}(x^{-1}t)^{n}y^{n-1}.
\]
By definition, $\Phi$ is $\mathbb{G}_{a,k}$-equivariant for the
action by translations on the second factor on $\tilde{U}\times\mathbb{G}_{a,k}$
and the action $\nu_{m}$ on $X_{m}|_{U}$. Furthermore, since $\frac{\partial P}{\partial t}(x,0,t)=mx^{-1}$
does not vanish on $\tilde{U}\times\mathbb{G}_{a,k}$, it follows
that the Jacobian matrix 
\[
J(\Phi)=\left(\begin{array}{ccc}
1 & t & y\\
0 & 1 & 0\\
\frac{\partial P}{\partial x} & \frac{\partial P}{\partial y} & \frac{\partial P}{\partial t}\\
-mx^{-m-1} & 0 & 0
\end{array}\right)
\]
of $\Phi$ has rank $3$ at every point of $\tilde{U}\times\mathbb{G}_{a,k}$.
So $\Phi:\tilde{U}\times\mathbb{G}_{a,k}\rightarrow X_{m}|_{U}$ is
an \'etale trivialization of the restriction of the $\mathbb{G}_{a,k}$-action
$\nu_{m}$ on $X_{m}|_{U}$. The coordinate ring $B$ of the fiber
product $(\tilde{U}\times\mathbb{G}_{a,k})\times_{X_{m}|_{U}}(\tilde{U}\times\mathbb{G}_{a,k})$
is isomorphic to the quotient of $k[x_{1}^{\pm1},x_{2}^{\pm1},y,t_{1},t_{2}]$
by the ideal $I$ generated by the elements 
\[
x_{1}^{-m}-x_{2}^{-m},\;x_{1}-x_{2}+y(t_{1}-t_{2}),\;\textrm{and}\;P(x_{1},y,t_{1})-P(x_{2},y,t_{2}).
\]
Writing $x_{1}^{-m}-x_{2}^{-m}=(x_{1}^{-1}-x_{2}^{-1})R(x_{1}^{-1},x_{2}^{-1})$,
$B$ decomposes as the product of the rings 

\[
B_{0}\cong k[x_{1}^{\pm1},x_{2}^{\pm1}][y,t_{1},t_{2}]/(x_{1}^{-1}-x_{2}^{-1},x_{1}-x_{2}+y(t_{1}-t_{2}),P(x_{1},y,t_{1})-P(x_{2},y,t_{2}))
\]
and 
\[
B_{1}=k[x_{1}^{\pm1},x_{2}^{\pm1}][y,t_{1},t_{2}]/(R(x_{1}^{-1},x_{2}^{-1}),x_{1}-x_{2}+y(t_{1}-t_{2}),\;P(x_{1},y,t_{1})-P(x_{2},y,t_{2})).
\]
Since $P(x_{1},y,t_{1})-P(x_{1},y,t_{2})=x_{1}^{-1}(t_{1}-t_{2})(1+yS(x_{1}^{-1},y,t_{1},t_{2}))$
it follows that the homomorphism 
\[
k[x^{\pm1},y][t]\rightarrow B_{0},\,(x,y,t)\mapsto(x_{1},y,t_{1})
\]
is an isomorphism. On the other hand, since $x_{1}-x_{2}$ in invertible
in $k[x_{1}^{\pm1},x_{2}^{\pm1}]/(R(x_{1}^{-1},x_{2}^{-1}))$, $y$
is invertible in $B_{1}$ and we get an isomorphism 
\[
k[x_{1}^{\pm1},x_{2}^{\pm1}]/(R(x_{1}^{-1},x_{2}^{-1}))[y^{\pm1}][t]\rightarrow B_{1},\,(x_{1},x_{2},y,t)\mapsto(x_{1},x_{2},y,t_{1}).
\]
Summing up, $(\tilde{U}\times\mathbb{G}_{a,k})\times_{X_{m}|_{U}}(\tilde{U}\times\mathbb{G}_{a,k})$
is $\mathbb{G}_{a,k}$-equivariantly isomorphic to the disjoint union
of 
\[
\mathrm{Spec}(B_{0})\cong\tilde{U}\times\mathbb{G}_{a,k}\quad\textrm{and}\quad\mathrm{Spec}(B_{1})\cong(\tilde{U}\times_{U_{*}}\tilde{U})\setminus\mathrm{Diag}\times\mathbb{G}_{a,k}
\]
on which $\mathbb{G}_{a,k}$ acts by translations on the second factors,
and we get a cartesian square of \'etale equivalence relations  \[\xymatrix{ R\times \mathbb{G}_{a,k} \cong (\tilde{U}\times\mathbb{G}_{a,k})\times_{X_{m}|_{U}}(\tilde{U}\times\mathbb{G}_{a,k}) \ar@<0.5ex>[rr]^-{\mathrm{pr}_1} \ar@<-0.5ex>[rr]_-{\mathrm{pr}_2} \ar[d]_{\mathrm{pr}_R} & & \tilde{U}\times\mathbb{G}_{a,k}  \ar[d]^{\mathrm{pr}_{\tilde{U}}} \\ R \ar@<0.5ex>[rr]^{\mathrm{pr}_{1}\circ(\mathrm{diag}\sqcup j)} \ar@<-0.5ex>[rr]_{\mathrm{pr}_{2}\circ(\mathrm{diag}\sqcup j)} & &\tilde{U} }\]where
$\mathrm{diag}\sqcup j$ is the morphism defined in (\ref{lem:CatQuot-Schemes})
and where the vertical morphisms are trivial $\mathbb{G}_{a,k}$-torsors.
By \cite[I.5.8]{Knu71}, the right-hand side morphism descends to
a $\mathbb{G}_{a,k}$-torsor 
\[
\pi:X_{m}|_{U}=(\tilde{U}\times\mathbb{G}_{a,k})/(R\times\mathbb{G}_{a,k})\rightarrow\tilde{U}/R=\delta_{m}^{-1}(U).
\]
Since $\rho_{m}|_{U}:X_{m}|_{U}\rightarrow\tilde{q}_{m}^{-1}(U)$
is also by definition a $\mathbb{G}_{a,k}$-torsor, it follows that
there exists a unique isomorphism $\alpha:\tilde{U}/R\stackrel{\cong}{\rightarrow}\tilde{q}_{m}^{-1}(U)$
such that $\rho_{m}|_{U}=\alpha\circ\pi$. This completes the proof. 
\end{proof}

\subsection{Another description of the algebraic space quotients }

An alternative complementary description of the algebraic space $\beta:\tilde{U}/R\rightarrow U$
constructed in subsection \ref{subsec:AlgSpace-Quotient} was given
in \cite{DuFin14} in a more general context. Since this description
is sometimes more convenient to use in practice, let us review it
in detail in our particular situation. We use the notation of subsection
\ref{subsec:AlgSpace-Quotient}. 

The Galois group $\mu_{n}=\mathrm{Spec}(k[\varepsilon]/(\varepsilon^{m}-1))$
of $m$-th roots of unity in $k^{*}$ acts on the finite \'etale
cover 
\[
f:\tilde{C}=\mathrm{Spec}(k[x^{\pm1}])\rightarrow C=\mathrm{Spec}(k[v^{\pm1}]),\,x\mapsto v=x^{-m}
\]
by $x\mapsto\varepsilon x$. We let $V_{m}$ be the scheme obtained
by gluing $m$ copies $\tilde{U}_{i}$, $i\in\mathbb{Z}/m\mathbb{Z}$,
of $\tilde{U}=\mathrm{Spec}(k[y,x^{\pm1}])=\mathbb{A}_{k}^{1}\times\tilde{C}$
by the identity outside the curves $\{0\}\times\tilde{C}\subset\tilde{U}_{i}$.
The group $\mu_{m}$ now acts freely on $V_{m}$ by 
\[
\tilde{U}_{i}\ni(y,x)\mapsto(y,\varepsilon x)\in\tilde{U}_{i+1},\quad i\in\mathbb{Z}/m\mathbb{Z},
\]
and the local isomorphisms $\mathrm{id}:\tilde{U}_{i}\rightarrow\tilde{U},$
glue to a global morphism $\pi_{m}:V_{m}\rightarrow\tilde{U}$ which
is equivariant for the $\mu_{m}$-action $(y,x)\mapsto(y,\varepsilon x)$
on $\tilde{U}$. Since the so-defined $\mu_{m}$-action on $V_{m}$
has trivial isotropies, a quotient $\xi:V_{m}\rightarrow V_{m}/\mu_{m}$
exists in the category of algebraic spaces in the form of an \'etale
$\mu_{m}$-torsor over a certain algebraic space $V_{m}/\mu_{m}$.
The $\mu_{m}$-invariant morphism $\gamma_{m}=(\mathrm{id}\times f)\circ\pi_{m}:V_{m}\rightarrow U$
descends to a morphism $\overline{\gamma}_{m}:V_{m}/\mu_{m}\rightarrow U$
which restricts to an isomorphism over $U_{*}=U\setminus(\{0\}\times C)$.
In contrast, $\overline{\gamma_{m}}^{-1}(\{0\}\times C)$ is isomorphic
as a scheme over $C$ to the quotient of $\mu_{m}\times\tilde{C}$
by the diagonal action of $\mu_{m}$, hence to $\tilde{C}$. 

\begin{figure}[ht]
\psset{unit=0.8}
\begin{pspicture}(12,-1)(10,8)
\rput(1,4){\usebox{\XZ}}
\psline{->}(4,3.8)(4,2.8)
\rput(4.5,3.3){{\scriptsize $\pi_m$}}
\rput(0,0){\usebox{\XC}}
\rput(8,0){\usebox{\Xspace}}
\rput(16,0){\usebox{\XCzero}}
\psline{->}(6.45,4.8)(10,2.6)
\rput(8.5,3.8){{\scriptsize $\xi$}}
\psline{->}(6.5,5)(18,2.6)
\rput(13,3.9){{\scriptsize $\gamma_m$}}
\psline{->}(14,1)(16.5,1)
\rput(15.2,1.3){{\scriptsize $\bar{\gamma}_m$}}
\end{pspicture}
\caption{Construction of $\tilde{U}/R $ as a quotient of $V_m$ by a free $\mu_m$-action}
\end{figure}
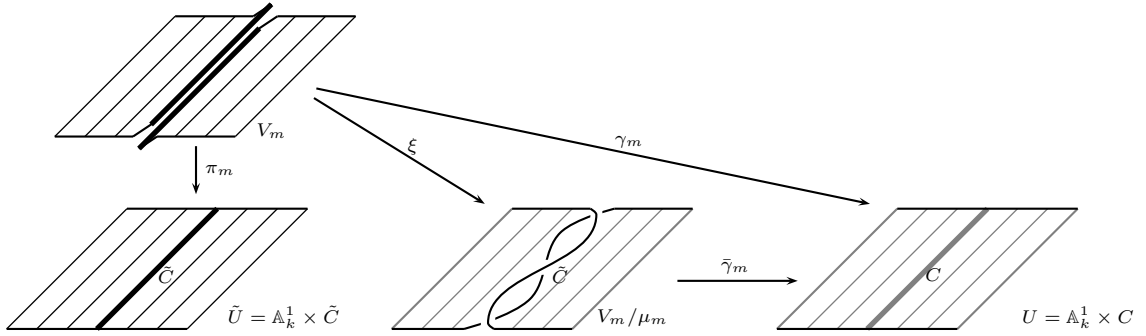
\begin{lem}
\label{lem:QuotientSpace-GaloisAction}The algebraic space $\overline{\gamma}_{m}:V_{m}/\mu_{m}\rightarrow U$
is $U$-isomorphic to $\beta:\tilde{U}/R\rightarrow U$. 
\end{lem}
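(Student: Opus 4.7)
The plan is to exhibit both algebraic spaces $V_m/\mu_m$ and $\tilde{U}/R$ as quotients of $\tilde{U}$ by the same \'etale equivalence relation, thereby producing a canonical $U$-isomorphism between them.

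First I would consider the composition $\iota_0:\tilde{U}_0\hookrightarrow V_m\stackrel{\xi}{\to}V_m/\mu_m$ of the inclusion of the $0$-th copy of $\tilde{U}$ in $V_m$ with the quotient morphism. This map is \'etale, being the composition of an open immersion with an \'etale $\mu_m$-torsor, and it is surjective because the $\mu_m$-orbit of any point $(y,x)_i\in\tilde{U}_i\subset V_m$ meets $\tilde{U}_0$ in the single point obtained from $(y,\varepsilon^{-i}x)_0$. Hence $\iota_0$ is an \'etale surjective cover of $V_m/\mu_m$ by $\tilde{U}$.

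The key step is to identify the \'etale equivalence relation $\tilde{U}_0\times_{V_m/\mu_m}\tilde{U}_0\rightrightarrows\tilde{U}_0$ presenting $V_m/\mu_m$ with the relation $R$. Using the torsor identification $V_m\times_{V_m/\mu_m}V_m\cong V_m\times\mu_m$, this fiber product is the locus of pairs $(p,\varepsilon^j)\in\tilde{U}_0\times\mu_m$ such that $\varepsilon^j\cdot p$ also lies in $\tilde{U}_0$. A short case analysis---separating the case $p\in\tilde{U}\setminus(\{0\}\times\tilde{C})=\varphi^{-1}(U_*)$, where all $\mu_m$-translates stay in $\tilde{U}_0$ via the identity gluing of the $\tilde{U}_i$, from the case $p\in(\{0\}\times\tilde{C})_0$, where the non-trivial $\mu_m$-translates land in genuinely different copies $(\{0\}\times\tilde{C})_j$ and thus outside $\tilde{U}_0$---yields
\[
\tilde{U}_0\times_{V_m/\mu_m}\tilde{U}_0\cong\tilde{U}\sqcup\bigsqcup_{j=1}^{m-1}\varphi^{-1}(U_*),
\]
with source and target maps given on the $j$-th non-diagonal component by $(y,x)\mapsto(y,x)$ and $(y,x)\mapsto(y,\varepsilon^jx)$ respectively. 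Decomposing $R=\tilde{U}\sqcup(\tilde{U}\times_{U_*}\tilde{U})\setminus\mathrm{Diag}$ in the analogous way as $\tilde{U}\sqcup\bigsqcup_{j=1}^{m-1}\varphi^{-1}(U_*)$, indexed by the non-trivial $m$-th roots of unity via $(p_1,p_2)\leftrightarrow(p_1,\varepsilon^j)$ with $p_2=\varepsilon^j\cdot p_1$, one sees that the two \'etale equivalence relations on $\tilde{U}$ coincide.

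Since an algebraic space is canonically recovered as the quotient of any \'etale surjective cover by the associated fiber product equivalence relation, this identification yields a canonical isomorphism $V_m/\mu_m\cong\tilde{U}/R$. Compatibility with the maps to $U$ is then automatic: both $\overline{\gamma}_m\circ\iota_0$ and the composition $\tilde{U}\to\tilde{U}/R\stackrel{\beta}{\to}U$ are equal to $\varphi:\tilde{U}\to U$, so $\overline{\gamma}_m$ and $\beta$ are both obtained as the unique descent of $\varphi$ along the common \'etale cover. I expect the main subtlety to be the careful bookkeeping of how the point $(y,\varepsilon^jx)_j\in\tilde{U}_j$ does or does not get identified with a point of $\tilde{U}_0$ in $V_m$, according to whether $y\neq 0$ (via the identity gluing) or $y=0$ (no such identification), since this dichotomy is precisely what controls the shape of the fiber product $\tilde{U}_0\times_{V_m/\mu_m}\tilde{U}_0$ over each of the two strata of $U$.
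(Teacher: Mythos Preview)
Your proposal is correct and follows essentially the same route as the paper. Both arguments hinge on the open immersion $i_0:\tilde{U}\cong\tilde{U}_0\hookrightarrow V_m$ and on decomposing $(\tilde{U}\times_{U_*}\tilde{U})\setminus\mathrm{Diag}$ into its $\mu_m\setminus\{1\}$-indexed components; the paper packages this as a cartesian square showing that $R$ is the pullback along $i_0\times i_0$ of the relation $\mu_m\times V_m\rightrightarrows V_m$, and then uses this to descend $i_0$ to a section of a separately constructed map $\overline{\sigma}:V_m/\mu_m\to\tilde{U}/R$, whereas you phrase the same computation as directly identifying the fiber product $\tilde{U}_0\times_{V_m/\mu_m}\tilde{U}_0$ with $R$ and invoking uniqueness of the quotient---a slightly more economical organization of the same content.
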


\begin{proof}
Indeed, letting $\alpha:\mu_{m}\times V_{m}\rightarrow V_{m}$ be
the $\mu_{m}$-action on $V_{m}$, the algebraic space $V_{m}/\mu_{m}$
is by definition the quotient of $V_{m}$ by the \'etale equivalence
relation $(\alpha,\mathrm{pr}_{2}):\mu_{m}\times V_{m}\rightrightarrows V_{m}$.
It is straightforward to check with the definition of $\alpha$ and
$R$ that the composition $\sigma:V_{m}\rightarrow\tilde{U}/R$ of
$\pi_{m}:V_{m}\rightarrow\tilde{U}$ with the quotient morphism $\tilde{U}\rightarrow\tilde{U}/R$
is a quasi-finite $\mu_{m}$-invariant morphism, which descends to
a bijective quasi-finite $U$-morphism $\overline{\sigma}:V_{m}/\mu_{m}\rightarrow\tilde{U}/R$.
To prove that $\overline{\sigma}$ is an isomorphism, it now suffices
to construct a section of it. Let $i_{0}:\tilde{U}\rightarrow V_{m}$
be the section of $\pi_{m}$ defined by the inclusion of $\tilde{U}$
as the open subset $\tilde{U}_{0}\subset V_{m}$. Since the restriction
of $\mathrm{id}\times f:\tilde{U}\rightarrow U$ over $U_{*}$ is
a finite \'etale $\mu_{m}$-cover, there exists an isomorphism 
\[
\tilde{U}\times_{U_{*}}\tilde{U}\stackrel{\cong}{\rightarrow}(\tilde{U}\setminus(\{0\}\times\tilde{C}))\times\mu_{m}
\]
which maps the diagonal $\mathrm{Diag}$ onto $\tilde{U}\setminus(\{0\}\times\tilde{C})\times\{1\}$.
This yields an isomorphism 
\[
(\tilde{U}\times_{U_{*}}\tilde{U})\setminus\mathrm{Diag}\stackrel{\cong}{\rightarrow}(\tilde{U}\setminus(\{0\}\times\tilde{C}))\times(\mu_{m}\setminus\{1\})
\]
hence an open embedding
\[
\zeta_{0}:R=\tilde{U}\sqcup(\tilde{U}\times_{U_{*}}\tilde{U})\setminus\mathrm{Diag}\hookrightarrow V_{m}\times\{1\}\sqcup V_{m}\times(\mu_{m}\setminus\{1\})
\]
whose image is the union of the open subsets $\tilde{U}_{0}\times\{1\}$
of $V_{m}\times\{1\}$ and $(\tilde{U}_{0}\setminus(\{0\}\times\tilde{C}))\times(\mu_{m}\setminus\{1\})$
of $V\times(\mu_{m}\setminus\{1\})$. By construction of $\zeta_{0}$,
the diagram \[\xymatrix{R=\tilde{U}\sqcup (\tilde{U} \times_{U_*} \tilde{U})\setminus\mathrm{Diag} \ar[rr]^-{\mathrm{diag}\sqcup j} \ar[d]_{\zeta_0} & & \tilde{U}\times\tilde{U} \ar[d]^{i_0\times i_0} \\ V\times\mu_m \ar[rr]^{\alpha\times \mathrm{pr_2}} & & V\times V}\] 
is cartesian. It follows that the section $i_{0}:\tilde{U}\rightarrow V_{m}$
of $\pi_{m}:V_{m}\rightarrow\tilde{U}$ descends to a $U$-morphism
$\tilde{U}/R\rightarrow V_{m}/\mu_{m}$ which is a section of $\overline{\sigma}$. 
\end{proof}
A practical consequence of Lemma \ref{lem:QuotientSpace-GaloisAction}
is the following: 
\begin{criterion}
\label{cri:Criterion-descent}A morphism $\tau:Y\rightarrow U$ from
a scheme $Y$ factors through a morphism $\tilde{\tau}:Y\rightarrow\tilde{U}/R$
if and only if the $\mu_{m}$-equivariant morphism $\mathrm{pr}_{2}:Y\times_{U}\tilde{U}\rightarrow\tilde{U}$
lifts to a $\mu_{m}$-equivariant morphism $\tilde{\mathrm{pr}}_{2}:Y\times_{U}\tilde{U}\rightarrow V_{m}$
such that $\mathrm{pr}_{2}=\pi_{m}\circ\tilde{\mathrm{pr}_{2}}.$ 
\end{criterion}

\subsection{An application}

To finish this section, let us give a first concrete application of
Criterion \ref{cri:Criterion-descent}. Given integers $m,n,r\geq1$
such that $\gcd(m,r)=1$, we let $X(m,n,r)$ be the smooth threefold
in $\mathbb{A}_{k}^{4}=\mathrm{Spec}(k[x,y,u,v])$ defined by the
equation 
\[
x^{m}v^{r}-y^{n}u=1.
\]
We thus have $X(m,1,1)=X_{m}$. The locally nilpotent $k[y,v]$-derivation
\[
\partial=y^{n}\frac{\partial}{\partial x}+mx^{m-1}v^{r}\frac{\partial}{\partial u}
\]
of the coordinate ring of $X(m,n,r)$ defines a fixed point free $\mathbb{G}_{a,k}$-action
on $X(m,n,r)$. As for $X_{m}$, the projection 
\[
q_{(m,n,r)}=\mathrm{pr}_{y,v}:X(m,n,r)\rightarrow\mathbb{A}_{*}^{2}=\mathrm{Spec}(k[y,v])\setminus\{(0,0)\}
\]
is a smooth $\mathbb{G}_{a,k}$-invariant morphism which restricts
to the trivial $\mathbb{G}_{a,k}$-torsor over the principal affine
open subset $\mathbb{A}_{k,y}^{2}=\mathrm{Spec}(k[y^{\pm1},v))$ of
$\mathbb{A}_{*}^{2}$. 
\begin{prop}
\label{prop:Xmnr-ALgQuotient}The morphism $q_{(m,n,r)}:X(m,n,r)\rightarrow\mathbb{A}_{*}^{2}$
factors through an \'etale locally trivial $\mathbb{G}_{a,k}$-torsor
$\rho_{(m,n,r)}:X(m,n,r)\rightarrow\mathfrak{S}_{m}$. 
\end{prop}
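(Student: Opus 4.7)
The plan is to build $\rho_{(m,n,r)}$ by gluing local definitions over the Zariski cover $\mathbb{A}_{*}^{2} = \mathbb{A}_{k,y}^{2} \cup U$, with $U = \mathbb{A}_{k,v}^{2}$. Over $\mathbb{A}_{k,y}^{2}$, the function $x/y^n$ satisfies $\partial(x/y^n) = 1$, so $q_{(m,n,r)}$ restricts to a trivial $\mathbb{G}_{a,k}$-torsor and I set $\rho_{(m,n,r)}$ equal to $q_{(m,n,r)}$ there, identifying $\mathbb{A}_{k,y}^{2}$ with the corresponding open subscheme of $\mathfrak{S}_m$. Over $U$, I apply Criterion \ref{cri:Criterion-descent}. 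The substitution $w := x\xi^{-r}$ (legitimate because $v = \xi^{-m}$) identifies the base change $X(m,n,r)|_U \times_U \tilde{U}$ with $Y(m,n) \times \tilde{C}$, where $Y(m,n) = \mathrm{Spec}(k[w,y,u]/(w^m - y^n u - 1))$, and converts the pulled-back $\mu_m$-action to $(w,y,\xi,u) \mapsto (\varepsilon^{-r}w, y, \varepsilon\xi, u)$.

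Let $s \in \mathbb{Z}/m\mathbb{Z}$ be the inverse of $r$, which exists since $\gcd(m,r) = 1$, and cover $Y(m,n)$ by the Zariski opens
\[
W_i = \{p \in Y(m,n) : w(p) \neq \varepsilon^j \text{ for every } j \in \mathbb{Z}/m\mathbb{Z} \text{ with } j \neq i\}, \quad i \in \mathbb{Z}/m\mathbb{Z}.
\]
These cover $Y(m,n)$ because $w(p)$ can coincide with at most one $m$-th root of unity. On $W_i \times \tilde{C}$ define $\tilde{\mathrm{pr}}_2(w, y, \xi, u) := (y, \xi) \in \tilde{U}_{-si} \subset V_m$. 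Pairwise overlaps $W_i \cap W_j$ for $i \neq j$ exclude every $m$-th root of unity from $w$, so $w^m - 1 = y^n u \neq 0$ and in particular $y \neq 0$; over this locus the sheets of $V_m$ are identified by the identity, so the partial maps glue. Equivariance holds by bookkeeping: the $\mu_m$-action sends $W_i$ to $W_{i-r}$ and $\tilde{U}_{-si}$ to $\tilde{U}_{-si+1} = \tilde{U}_{-s(i-r)}$ (using $rs \equiv 1 \pmod m$). Criterion \ref{cri:Criterion-descent} then produces $\rho_{(m,n,r)}|_U : X(m,n,r)|_U \to \tilde{U}/R = \delta_m^{-1}(U)$, which agrees with the definition on $\mathbb{A}_{k,y}^2$ over the intersection $U_* = \mathbb{A}_{k,y}^2 \cap U$ via the isomorphism $\beta|_{U_*}$, hence yielding a globally defined $\rho_{(m,n,r)} : X(m,n,r) \to \mathfrak{S}_m$ satisfying $\delta_m \circ \rho_{(m,n,r)} = q_{(m,n,r)}$.

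For the étale-local torsor structure, Zariski triviality over $\mathbb{A}_{k,y}^{2}$ is already in hand. Over $\tilde{U}/R$, pull back along the étale $\mu_m$-torsor $V_m \to \tilde{U}/R$: the result is precisely $\tilde{\mathrm{pr}}_2 : Y(m,n) \times \tilde{C} \to V_m$. On each Zariski open $\tilde{U}_j \subset V_m$, the morphism $\sigma_j(y, \xi) := (\varepsilon^{-rj}, y, \xi, 0)$ is well-defined (as $(\varepsilon^{-rj})^m = 1 = 1 + y^n \cdot 0$) and factors through $W_{-rj} \times \tilde{C}$; the map $(t, (y, \xi)) \mapsto t \cdot \sigma_j(y, \xi)$ then sweeps out the entire preimage $\tilde{\mathrm{pr}}_2^{-1}(\tilde{U}_j)$, as one checks by inspecting fibers over $\{y \neq 0\}$ (parametrized by $w$ through the $\mathbb{G}_{a,k}$-orbit) and over $\{y = 0\}$ on the sheet $\tilde{U}_j$ (parametrized by $u$, since $w = \varepsilon^{-rj}$ is fixed there). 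Hence $\tilde{\mathrm{pr}}_2$ is Zariski-locally trivial on $V_m$, and descent along the étale $\mu_m$-torsor $V_m \to \tilde{U}/R$ upgrades $\rho_{(m,n,r)}|_{\tilde{U}/R}$ to an étale-locally trivial $\mathbb{G}_{a,k}$-torsor. The main subtlety lies in the bookkeeping of the sheet-assignment $i \mapsto -si$, which is forced by $\mu_m$-equivariance and crucially uses $\gcd(m,r) = 1$.
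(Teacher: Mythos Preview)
Your proof is correct and follows essentially the same strategy as the paper's: reduce to $U=\mathbb{A}_{k,v}^{2}$, pass to the \'etale $\mu_m$-cover $\tilde{U}\to U$, construct a $\mu_m$-equivariant lift $\tilde{\mathrm{pr}}_2:\tilde{Y}\to V_m$ of $\mathrm{pr}_2$ (i.e.\ apply Criterion~\ref{cri:Criterion-descent}), and then check that $\tilde{\mathrm{pr}}_2$ is a $\mathbb{G}_{a,k}$-torsor sheet by sheet on $V_m$. The differences are purely in execution: your substitution $w=x\xi^{-r}$ cleanly factors $\tilde{Y}\cong Y(m,n)\times\tilde{C}$, and you verify triviality over each $\tilde{U}_j$ by exhibiting a section $\sigma_j$ and arguing that the orbit map is a bijection, whereas the paper writes down an explicit closed-form isomorphism $\tilde{U}_i\times\mathbb{A}^1_k\xrightarrow{\cong}\tilde{Y}\setminus\bigcup_{j\neq i}\tilde{S}_j$ and identifies an explicit slice coordinate $v_i$ with $\tilde{\partial}(v_i)=1$. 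One small point you leave implicit and might state for clarity is that $\tilde{\mathrm{pr}}_2$ is $\mathbb{G}_{a,k}$-invariant (the image $(y,\xi)$ is fixed, and on $\{y=0\}$ the value of $w$, hence the sheet index, is fixed along orbits); this is needed for your orbit-sweeping argument to land in $\tilde{\mathrm{pr}}_2^{-1}(\tilde{U}_j)$ and for the torsor structure to descend.
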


\begin{proof}
By construction of $\mathfrak{S}_{m}$ as the gluing of $\tilde{U}/R$
and $\mathbb{A}_{k,y}^{2}$ described in subsection \ref{subsec:AlgSpace-Quotient},
it is enough to check as in the case of $X(m,1,1)$ that $q_{(m,n,r)}|_{U}:X(m,n,r)|_{U}\rightarrow U$
factors through a $\mathbb{G}_{a,k}$-torsor over $\tilde{U}/R\cong V_{m}/\mu_{m}$.
Let $\tilde{U}=\mathrm{Spec}(k[y,\lambda^{\pm1}])\rightarrow U$ be
the \'etale $\mu_{m}$-cover defined by $(y,\lambda)\mapsto(y,\lambda^{-m})$.
The fiber product $\tilde{Y}=Y\times_{U}\tilde{U}$ is isomorphic
to the closed subscheme in $\tilde{U}\times\mathrm{Spec}(k[x,u])$
defined by the equation 
\[
y^{n}u=(\lambda^{-r}x)^{m}-1
\]
on which $\mu_{m}$ acts by $(y,\lambda,x,u)\mapsto(y,\varepsilon\lambda,x,u$),
where $\varepsilon\in k^{*}$ is a primitive $m$-th root of unity.
The induced $\mathbb{G}_{a,k}$-action on $X(m,n,r)|_{U}$ lifts on
$\tilde{Y}$ to the $\mathbb{G}_{a,k}$-action commuting with the
action of $\mu_{m}$ defined by the locally nilpotent $k[y,\lambda^{\pm1}]$-derivation
\[
\tilde{\partial}=y^{n}\frac{\partial}{\partial x}+mx^{m-1}\lambda^{r}\frac{\partial}{\partial u}.
\]
Since $\gcd(m,r)=1$, the inverse image by $\mathrm{pr}_{2}:\tilde{Y}\rightarrow\tilde{U}$
of the curve $\tilde{C}=\{y=0\}\cong\mathrm{Spec}(k[\lambda^{\pm1}])$
is the disjoint union of $m$ irreducible surfaces $\tilde{S}_{i}=\tilde{C}_{i}\times\mathrm{Spec}(k[u])$,
where 
\[
\tilde{C}_{i}=\mathrm{Spec}(k[\lambda^{\pm1},x]/(\lambda^{-r}x-\varepsilon^{ri}))\cong\mathrm{Spec}(k[\lambda^{\pm1}]),\;i\in\mathbb{Z}/m\mathbb{Z}.
\]
Furthermore, the group $\mu_{m}$ acts transitively on $\mathrm{pr}_{2}^{-1}(\tilde{C})$
by $\tilde{S}_{i}\ni(\lambda,u)\mapsto(\varepsilon\lambda,u)\in\tilde{S}_{i+1}$.
For every $i\in\mathbb{Z}/m\mathbb{Z}$, the $\tilde{U}$-morphism
\[
\tilde{U}_{i}\times\mathbb{A}_{k}^{1}=\mathrm{Spec}(k[y,\lambda^{\pm1}][v_{i}])\rightarrow\tilde{Y},\;(y,\lambda,v_{i})\mapsto(y,\lambda,y^{n}v_{i}+(\varepsilon^{i}\lambda)^{r},\lambda^{-mr}v_{i}\prod_{j\neq i}(y^{n}v_{i}+\lambda^{r}(\varepsilon^{ri}-\varepsilon^{rj}))
\]
induces a $\tilde{U}$-isomorphism between $\tilde{U}_{i}\times\mathbb{A}_{k}^{1}\cong\tilde{U}\times\times\mathbb{A}_{k}^{1}$
and the $\mathbb{G}_{a,k}$-invariant open subset $\tilde{Y}\setminus\bigcup_{j\neq i}\tilde{S}_{j}$
of $\tilde{Y}$. Furthermore, using the expression $v_{i}=y^{-n}(x-(\varepsilon^{i}\lambda)^{r})$
as a rational function on $\tilde{Y}$, we see that $\tilde{\partial}(v_{i})=1$
so that $\tilde{Y}\setminus\bigcup_{j\neq i}\tilde{S}_{j}$ is $\mathbb{G}_{a,k}$-equivarianlty
isomorphic to $\tilde{U}_{i}\times\mathbb{A}_{k}^{1}$ on which $\mathbb{G}_{a,k}$
acts by translations on the second factor. The restriction of $\mathrm{pr}_{2}$
to $\tilde{Y}\setminus\bigcup_{j\neq i}\tilde{S}_{j}$ is thus the
trivial $\mathbb{G}_{a,k}$-torsor over $\tilde{U}_{i}$. It follows
that $\mathrm{pr}_{2}:\tilde{Y}\rightarrow\tilde{U}$ factors through
a $\mathbb{G}_{a,k}$-torsor $\tilde{\mathrm{pr}}_{2}:\tilde{Y}\rightarrow V_{m}$
with gluing isomorphisms defined by 
\[
(y,\lambda^{\pm1},v_{i})\mapsto(y,\lambda^{\pm1},v_{i}+y^{-n}\lambda^{r}(\varepsilon^{ri}-\varepsilon^{rj})).
\]
By construction $\tilde{\mathrm{pr}}_{2}:\tilde{Y}\rightarrow V_{m}$
is equivariant for the $\mu_{m}$-actions on $\tilde{Y}$ and $V_{m}$
respectively. So $\tilde{\mathrm{pr}}_{2}:\tilde{Y}\rightarrow V_{m}$
descends to an \'etale locally trivial $\mathbb{G}_{a,k}$-torsor
$\rho_{(m,n,r),U}:\tilde{Y}/\mu_{m}=X(m,n,r)|_{U}\rightarrow V_{m}/\mu_{m}=\tilde{U}/R$
such that $q_{(m,n,r)}|_{U}=\beta\circ\rho_{(m,n,r),U}$ as desired.
\end{proof}
It seems that the discrete family of threefolds $X(m,n,r)$ has not
been studied yet in the literature. In particular, to the author's
knowledge, the dependence of their isomorphism types in terms of the
parameters $m$, $n$ and $r$ is unknown. The following result implies
that some of these could provide new types of exotic affine $3$-spheres: 
\begin{cor}
For every triple $(m,n,r)$ of positive integers such that $\gcd(m,r)=1$,
the affine fourfold $X(m,n,r)\times\mathbb{A}_{k}^{1}$ is isomorphic
to $\mathrm{SL}_{2}\times\mathbb{A}_{k}^{1}$ . 
\end{cor}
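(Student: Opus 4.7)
The plan is to exploit Proposition \ref{prop:Xmnr-ALgQuotient} by the same fiber product trick used at the very beginning of Section \ref{sec:QuotientSpaces-Exotic} to establish the isomorphisms \eqref{eq:Xm-SL2-cylinders}. Namely, the proposition provides an \'etale locally trivial $\mathbb{G}_{a,k}$-torsor
\[
\rho_{(m,n,r)}:X(m,n,r)\longrightarrow\mathfrak{S}_{m},
\]
and, as a special case, the torsor $\rho_{m}=\rho_{(m,1,1)}:X_{m}=X(m,1,1)\rightarrow\mathfrak{S}_{m}$ of Proposition \ref{prop:CatQuotientSpace-Xm}. Since $\rho_{(m,n,r)}$ and $\rho_{m}$ share the common base $\mathfrak{S}_{m}$, I would form the fiber product
\[
W=X(m,n,r)\times_{\mathfrak{S}_{m}}X_{m},
\]
which inherits via its two projections the structure of a $\mathbb{G}_{a,k}$-torsor over each factor.

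Next I would argue that each of these two torsors is trivial. The key point is that $X(m,n,r)$ and $X_{m}$ are affine, so a $\mathbb{G}_{a,k}$-torsor (whether initially \'etale or fppf locally trivial) over either of them is represented by a scheme; and since the base is affine and of characteristic zero, the vanishing of $H^{1}(X(m,n,r),\mathcal{O})$ and $H^{1}(X_{m},\mathcal{O})$ forces the torsors to be Zariski locally trivial, hence trivial. This yields compatible isomorphisms
\[
X(m,n,r)\times\mathbb{A}_{k}^{1}\cong W\cong X_{m}\times\mathbb{A}_{k}^{1},
\]
exactly parallel to \eqref{eq:Xm-SL2-cylinders}.

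To finish, I would combine this with the already established chain $X_{m}\times\mathbb{A}_{k}^{1}\cong X_{1}\times\mathbb{A}_{k}^{1}\cong\mathrm{SL}_{2}\times\mathbb{A}_{k}^{1}$ from \eqref{eq:Xm-SL2-cylinders}, giving the desired isomorphism $X(m,n,r)\times\mathbb{A}_{k}^{1}\cong\mathrm{SL}_{2}\times\mathbb{A}_{k}^{1}$. The only subtle point, which I regard as the main thing worth verifying carefully rather than a genuine obstacle, is the representability/affineness of $W$: because both projections are \'etale locally trivial $\mathbb{G}_{a,k}$-torsors over affine schemes, $W$ is an affine scheme and the cohomological triviality argument above really applies. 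Everything else is a direct application of the earlier results in the section.
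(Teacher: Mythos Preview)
Your proposal is correct and follows essentially the same argument as the paper: form the fiber product $W=X(m,n,r)\times_{\mathfrak{S}_{m}}X_{m}$, use affineness of the two factors to trivialize the resulting $\mathbb{G}_{a,k}$-torsors, and then invoke \eqref{eq:Xm-SL2-cylinders}. Your remark on the representability of $W$ is a useful clarification that the paper leaves implicit.
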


\begin{proof}
Since $\rho_{(m,n,r)}:X(m,n,r)\rightarrow\mathfrak{S}_{m}$ is the
total space of an \'etale locally trivial $\mathbb{G}_{a,k}$-torsor,
it follows that the fiber product $X(m,n,r)\times_{\mathfrak{S}_{m}}X(m,1,1)$
is simultaneously the total space of an \'etale $\mathbb{G}_{a,k}$-torsor
over $X(m,n,r)$ and $X_{m}=X(m,1,1)$ via the first and second projection
respectively. Since $X_{m}$ and $X(m,n,r)$ are affine, these torsors
are the trivial ones, which yields isomorphisms 
\[
X(m,n,r)\times\mathbb{A}_{k}^{1}\cong X(m,n,r)\times_{\mathfrak{S}_{m}}X(m,1,1)\cong X_{m}\times\mathbb{A}_{k}^{1}.
\]
The result follows since on the other hand $X_{m}\times\mathbb{A}_{k}^{1}\cong X_{1}\times\mathbb{A}_{k}^{1}=\mathrm{SL}_{2}\times\mathbb{A}_{k}^{1}$
by (\ref{eq:Xm-SL2-cylinders}). 
\end{proof}

\section{Fixed point free $\mathbb{G}_{a,k}$- actions on punctured deformed
Koras-Russell threefolds}

Every deformed Koras-Russell threefold 
\[
Y=Y(m,n,r,h)=\{x^{n}z=y^{m}-t^{r}+xh(x,y,t)\}\subset\mathbb{A}_{k}^{4}
\]
admits a $\mathbb{G}_{a,k}$-action defined by the locally nilpotent
derivation $k[y,t]$-derivation
\[
x^{n}\frac{\partial}{\partial y}+(my^{m-1}+x\frac{\partial h}{\partial y}(x,y,t))\frac{\partial}{\partial z}
\]
of its coordinate ring. The fixed point locus of this action is equal
to the affine line 
\[
\ell=\{x=y=t=0\}\cong\mathrm{Spec}(k[z]),
\]
so that the action restricts to a fixed point free $\mathbb{G}_{a,k}$-action
on the quasi-affine threefold $Y_{*}=Y\setminus\ell$. 

The proof of Theorem \ref{thm:MainTheorem} we give in the next subsection
then essentially follows from the basic observation that the categorical
quotient $Y_{*}/\mathbb{G}_{a,k}$ taken in the category of algebraic
spaces is isomorphic to the algebraic space $\mathfrak{S}_{m}$ described
in subsection \ref{subsec:AlgSpace-Quotient}. 

\subsection{\label{subsec:Proof-of-Theorem}Proof of Theorem \ref{thm:MainTheorem}}

Since the $\mathbb{G}_{a,k}$-action on $Y_{*}$ defined above is
fixed point free, the categorical quotient $Y_{*}\rightarrow Y_{*}/\mathbb{G}_{a,k}$
exists in the form of an \'etale locally trivial $\mathbb{G}_{a,k}$-torsor
over a certain algebraic space $X_{*}/\mathbb{G}_{a,k}$. The $\mathbb{G}_{a,k}$-invariant
projection $\mathrm{pr}_{x,t}:Y\rightarrow\mathbb{A}_{k}^{2}$ induces
a surjective morphism $\pi:Y_{*}\rightarrow\mathbb{A}_{*}^{2}=\mathbb{A}_{k}^{2}\setminus\{(0,0)\}$,
which restricts further over the principal affine open subset $\mathbb{A}_{k,x}^{2}=\mathrm{Spec}(k[x^{\pm1},t])$
to the trivial $\mathbb{G}_{a,k}$-torsor. On the other hand, the
restriction of $\pi$ over the curve $B\cong\mathrm{Spec}(k[t^{\pm1}])$
in $\mathbb{A}_{*}^{2}$ with equation $x=0$ factors as the composition
of the trivial $\mathbb{G}_{a,k}$-torsor 
\[
\pi:Y|_{B}\cong\mathrm{Spec}(k[y,t^{\pm1},z]/(y^{m}-t^{r}))\rightarrow\tilde{B}=\mathrm{Spec}(k[y,t^{\pm1}]/(y^{m}-t^{r}))
\]
with the projection $f:\tilde{B}\rightarrow B$, $(y,t)\mapsto t$.
Since $\gcd(m,r)=1$, the curves $B$ and $\tilde{B}$ are both isomorphic
to the punctured affine line $\mathbb{A}^{1}\setminus\{0\}$, and
$f$ is a finite cyclic \'etale cover of order $m$. This strongly
suggests that $Y_{*}/\mathbb{G}_{a,k}$ should be isomorphic to $\mathfrak{S}_{m}$,
and this is indeed the case: 
\begin{prop}
\label{prop:CatQuotientSpace-DKR} The categorical quotient $Y_{*}/\mathbb{G}_{a,k}$
in the category of algebraic spaces is isomorphic to $\mathfrak{S}_{m}$.
\end{prop}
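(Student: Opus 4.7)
The plan is to mimic the strategy of Proposition~\ref{prop:Xmnr-ALgQuotient}, with $\mathbb{A}_{*}^{2}=\mathrm{Spec}(k[x,t])\setminus\{(0,0)\}$, the curve $B=\{x=0\}$, the principal open $U=\mathrm{Spec}(k[x,t^{\pm1}])$ and the cyclic \'etale cover $\tilde{U}=\mathrm{Spec}(k[x,s^{\pm1}])\to U$, $s\mapsto t=s^{m}$ playing the roles of the corresponding spaces in subsection~\ref{subsec:AlgSpace-Quotient}. Under this identification $\mathfrak{S}_{m}$ is canonically obtained by gluing $\mathbb{A}_{k,x}^{2}$ and $\tilde{U}/R$, and I will produce the isomorphism $Y_{*}/\mathbb{G}_{a,k}\cong\mathfrak{S}_{m}$ by directly constructing a $\mathbb{G}_{a,k}$-torsor $Y_{*}\to\mathfrak{S}_{m}$ inducing $\pi$.

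Over the open $\mathbb{A}_{k,x}^{2}\subset\mathbb{A}_{*}^{2}$ there is nothing to do: since $\partial(y)=x^{n}$ is invertible on $Y_{*}|_{x\neq0}$, the function $y/x^{n}$ is a global slice and $\pi\colon Y_{*}|_{x\neq0}\to\mathbb{A}_{k,x}^{2}$ is already the trivial $\mathbb{G}_{a,k}$-torsor. Over $U$, I would apply Criterion~\ref{cri:Criterion-descent} to the pullback $\tilde{Y}_{*}:=Y_{*}|_{U}\times_{U}\tilde{U}$, cut out in $\tilde{U}\times\mathrm{Spec}(k[y,z])$ by the equation $x^{n}z=y^{m}-s^{mr}+xh(x,y,s^{m})$. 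The $\mu_{m}$-action $s\mapsto\varepsilon s$ commutes with the induced $\mathbb{G}_{a,k}$-action, and over $\{x=0\}\subset\tilde{U}$ the equation factors as $y^{m}-s^{mr}=\prod_{i\in\mathbb{Z}/m\mathbb{Z}}(y-\varepsilon^{i}s^{r})$ with $z$ free, so $\mathrm{pr}_{2}^{-1}(\{x=0\})$ decomposes as a disjoint union of $m$ irreducible components $\tilde{S}_{i}=\{x=0,\;y=\varepsilon^{i}s^{r}\}\cong\tilde{B}\times\mathbb{A}_{k}^{1}$ cyclically permuted by $\mu_{m}$; the $\mathbb{G}_{a,k}$-action preserves each $\tilde{S}_{i}$ and acts on it by translation with the non-vanishing scale factor $\partial(z)|_{\tilde{S}_{i}}=m\varepsilon^{i(m-1)}s^{r(m-1)}$. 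For each $i$ I would set $W_{i}:=\tilde{Y}_{*}\setminus\bigcup_{j\neq i}\tilde{S}_{j}$; the crucial step is then to prove that $\mathrm{pr}_{2}|_{W_{i}}\colon W_{i}\to\tilde{U}$ is a $\mathbb{G}_{a,k}$-torsor. A direct fibre-wise inspection (using the trivial torsor picture over $\{x\neq0\}$ and the description of $\tilde{S}_{i}$ over $\{x=0\}$) shows that every fiber is a single free $\mathbb{G}_{a,k}$-orbit isomorphic to $\mathbb{A}_{k}^{1}$, and the Jacobian criterion applied on $\tilde{S}_{i}$ (where $\partial_{y}F=m\varepsilon^{i(m-1)}s^{r(m-1)}\neq0$) shows $\mathrm{pr}_{2}|_{W_{i}}$ is smooth of relative dimension $1$. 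The induced morphism from the algebraic-space quotient, $W_{i}/\mathbb{G}_{a,k}\to\tilde{U}$, is therefore a bijective \'etale morphism between smooth surfaces, hence an isomorphism, which is exactly the $\mathbb{G}_{a,k}$-torsor condition.

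Once the $W_{i}\to\tilde{U}$ are known to be $\mathbb{G}_{a,k}$-torsors, they glue to a $\mathbb{G}_{a,k}$-torsor $\tilde{\mathrm{pr}}_{2}\colon\tilde{Y}_{*}\to V_{m}$: the restrictions of $W_{i}$ and $W_{j}$ to $\{x\neq0\}$ both coincide with the trivial torsor $\tilde{Y}_{*}|_{x\neq0}\to\tilde{U}\setminus\{x=0\}$, and the transitions between the charts $\tilde{U}_{i}$, $\tilde{U}_{j}$ of $V_{m}$ are the identity on that locus. By construction the $\mu_{m}$-action on $\tilde{Y}_{*}$ sends $W_{i}$ onto $W_{i+1}$ compatibly with the $\mu_{m}$-action sending $\tilde{U}_{i}$ to $\tilde{U}_{i+1}$, so $\tilde{\mathrm{pr}}_{2}$ is $\mu_{m}$-equivariant. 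Criterion~\ref{cri:Criterion-descent} together with Lemma~\ref{lem:QuotientSpace-GaloisAction} then yields a $\mathbb{G}_{a,k}$-torsor $Y_{*}|_{U}\to V_{m}/\mu_{m}\cong\tilde{U}/R$ which is visibly compatible on $\mathbb{A}_{k,x}^{2}\cap U$ with the trivial torsor of the first step, and gluing produces the desired $\mathbb{G}_{a,k}$-torsor $Y_{*}\to\mathfrak{S}_{m}$, identifying $\mathfrak{S}_{m}$ with $Y_{*}/\mathbb{G}_{a,k}$. The main obstacle in the plan is the torsor verification for $W_{i}\to\tilde{U}$: unlike in Proposition~\ref{prop:Xmnr-ALgQuotient}, the general polynomial $h$ rules out any explicit parametrization of $W_{i}$ as $\tilde{U}\times\mathbb{A}_{k}^{1}$ of the form used there, and one must argue abstractly via fibre-wise analysis combined with smoothness and bijectivity rather than by exhibiting a trivializing coordinate.
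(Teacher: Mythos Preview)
Your proposal is correct and follows essentially the same approach as the paper's own proof: both reduce to the open subset $U=\{t\neq0\}$, pass to the \'etale $\mu_{m}$-cover $\tilde{U}\to U$ (with $t=s^{m}$, resp.\ $t=\lambda^{m}$), decompose the fibre over $\{x=0\}$ into the $m$ components $\tilde{S}_{i}$ cyclically permuted by $\mu_{m}$, and then check that each $\tilde{Y}_{*}\setminus\bigcup_{j\neq i}\tilde{S}_{j}\to\tilde{U}$ is a $\mathbb{G}_{a,k}$-torsor to produce the $\mu_{m}$-equivariant torsor $\tilde{Y}_{*}\to V_{m}$ and descend. The paper likewise abandons the explicit trivializing coordinate used in Proposition~\ref{prop:Xmnr-ALgQuotient} and argues exactly as you do, via smoothness plus the observation that each fibre is a single free $\mathbb{G}_{a,k}$-orbit; so the ``main obstacle'' you flag is handled in the same abstract way.
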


\begin{proof}
The result can be extracted from the proof of Lemma 4.6 in \cite{DuOP18}
(see also \cite[Lemma 3.2]{DuFa18}). Let us nevertheless sketch the
main steps which proceed along the same lines as the method employed
in the proof of Proposition \ref{prop:Xmnr-ALgQuotient}. Since the
restriction of $\pi$ over $\mathbb{A}_{k,x}^{2}$ is the trivial
$\mathbb{G}_{a,k}$-torsor, it is again enough to check that the restriction
of $\pi$ over $U=\mathbb{A}_{k,t}^{2}=\mathrm{Spec}(k[x,t^{\pm1}])$
factors through a $\mathbb{G}_{a,k}$-torsor over $\tilde{U}/R\cong V_{m}/\mu_{m}$,
where $\tilde{U}=\mathrm{Spec}(k[x,\lambda^{\pm1}])\rightarrow U$
is \'etale $\mu_{m}$-cover defined by $(x,\lambda)\mapsto(x,\lambda^{m})$.
The fiber product $\tilde{Y}=Y\times_{U}\tilde{U}$ is isomorphic
to the closed subscheme in $\tilde{U}\times\mathrm{Spec}(k[y,z])$
defined by the equation
\[
x^{n}z=y^{m}-\lambda^{mr}+xh(x,y,\lambda^{m}),
\]
on which $\mu_{m}$ acts by $(x,y,\lambda,z)\mapsto(x,y,\varepsilon\lambda,z)$,
where $\varepsilon\in k^{*}$ is a primitive $m$-th root of unity.
The induced $\mathbb{G}_{a,k}$-action on $Y|_{U}$ lifts on $\tilde{Y}$
to the $\mathbb{G}_{a,k}$-action commuting with the action of $\mu_{m}$
defined by the locally nilpotent $k[x,\lambda^{\pm1}]$-derivation
\[
\tilde{\partial}=x^{n}\frac{\partial}{\partial y}+(my^{m-1}+x\frac{\partial h}{\partial y}(x,y,\lambda^{m}))\frac{\partial}{\partial z}
\]
Since $\gcd(m,r)=1$, the inverse image by $\mathrm{pr}_{2}:\tilde{Y}\rightarrow\tilde{U}$
of the curve $\tilde{B}=\{x=0\}\cong\mathrm{Spec}(k[\lambda^{\pm1}])$
is the disjoint union of the $\mathbb{G}_{a,k}$-invariant irreducible
surfaces $\tilde{S}_{i}=\tilde{B}_{i}\times\mathrm{Spec}(k[z])$,
where 
\[
\tilde{B}_{i}=\mathrm{Spec}(k[\lambda^{\pm1},y]/(y-(\varepsilon^{i}\lambda)^{r})\cong\mathrm{Spec}(k[\lambda^{\pm1}]),\;i\in\mathbb{Z}/m\mathbb{Z}.
\]
The restriction of the $\mathbb{G}_{a,k}$-action on each $\tilde{S}_{i}$
is given by the locally nilpotent $k[\lambda^{\pm1}]$-derivation
$m(\varepsilon^{i}\lambda)^{r(m-1)}\frac{\partial}{\partial z}$,
so that the projection $\mathrm{pr}_{\tilde{B}_{i}}:\tilde{S}_{i}\rightarrow\tilde{B}_{i}$
is a trivial $\mathbb{G}_{a,k}$-torsor. Furthermore, the group $\mu_{m}$
acts transitively on $\mathrm{pr}_{2}^{-1}(\tilde{B})$ by $\tilde{S}_{i}\ni(\lambda,z)\mapsto(\varepsilon\lambda,z)\in\tilde{S}_{i+1}$.
For every $i\in\mathbb{Z}/m\mathbb{Z}$, $\mathrm{pr}_{2}$ restricts
on the open subset $\tilde{Y}\setminus\bigcup_{j\neq i}\tilde{S}_{j}$
of $\tilde{Y}$ to a surjective $\mathbb{G}_{a,k}$-invariant smooth
morphism $\mathrm{pr}_{2,i}:\tilde{Y}\setminus\bigcup_{j\neq i}\tilde{S}_{j}\rightarrow\tilde{U}_{i}$
whose fibers each consist of a unique $\mathbb{G}_{a,k}$-orbit. It
follows that $\mathrm{pr}_{2,i}$ is a $\mathbb{G}_{a,k}$-torsor,
hence is isomorphic to the trivial one as $\tilde{U}_{i}\cong\tilde{U}$
is affine. We conclude that $\mathrm{pr}_{2}:\tilde{Y}\rightarrow\tilde{U}$
factors in a unique way through a $\mathbb{G}_{a,k}$-torsor $\tilde{\mathrm{pr}}_{2}:\tilde{Y}\rightarrow V_{m}$
equivariant for the $\mu_{m}$-actions on $\tilde{Y}$ and $V_{m}$
respectively. So $\tilde{\mathrm{pr}}_{2}:\tilde{Y}\rightarrow V_{m}$
descends to an \'etale locally trivial $\mathbb{G}_{a,k}$-torsor
$\tilde{\pi}:\tilde{Y}/\mu_{m}=Y|_{U}\rightarrow V_{m}/\mu_{m}=\tilde{U}/R$
which factors the projection $\pi:Y|_{U}\rightarrow U$. 
\end{proof}
We can now finish the proof of Theorem \ref{thm:MainTheorem} as follows.
By Proposition \ref{prop:CatQuotientSpace-Xm} and Proposition \ref{prop:CatQuotientSpace-DKR},
$X_{m}$ and $Y_{*}$ are \'etale locally trivial $\mathbb{G}_{a,k}$-torsors
over the same algebraic space $\mathfrak{S}_{m}$. This implies that
the fiber product $W_{m}=Y_{*}\times_{\mathfrak{S}_{m}}X_{m}$ is
simultanesouly the total space of \'etale $\mathbb{G}_{a,k}$-torsors
over $Y_{*}$ and $X_{m}$ via the first and second projection respectively.
Since $Y_{*}$ is separated, the $\mathbb{G}_{a,k}$-action on $W_{m}$
corresponding the $\mathbb{G}_{a,k}$-torsor $\mathrm{pr}_{1}:W_{m}\rightarrow Y_{*}$
is proper. Furthermore, since $Y_{*}$ is a scheme, $\mathrm{pr}_{1}:W_{m}\rightarrow Y_{*}$
is in fact locally trivial in the Zariski topology \cite{Gro58}.
On the other hand, since $X_{m}$ is affine $\mathrm{pr}_{2}:W_{m}\rightarrow X_{m}$
is the trivial $\mathbb{G}_{a,k}$-torsor. Thus $W_{m}\cong X_{m}\times\mathbb{A}_{k}^{1}$
and hence $W_{m}\cong\mathrm{SL}_{2}\times\mathbb{A}_{k}^{1}$ by
(\ref{eq:Xm-SL2-cylinders}). The $\mathbb{G}_{a,k}$-action on $W_{m}$
defining the $\mathbb{G}_{a,k}$-torsor $\mathrm{pr}_{1}:W_{m}\rightarrow Y_{*}$
thus corresponds via these isomorphisms to a proper and Zariski locally
trivial $\mathbb{G}_{a,k}$-action on $\mathrm{SL}_{2}\times\mathbb{A}_{k}^{1}$,
whose categorical quotient $(\mathrm{SL}_{2}\times\mathbb{A}_{k}^{1})/\mathbb{G}_{a,k}$
in the category of algebraic spaces is isomorphic to the quasi-affine
variety $Y_{*}=Y\setminus\ell$. 
\begin{lem}
The categorical quotient $(\mathrm{SL}_{2}\times\mathbb{A}_{k}^{1})/\!/\mathbb{G}_{a,k}=\mathrm{Spec}(\Gamma(\mathrm{SL}_{2}\times\mathbb{A}_{k}^{1},\mathcal{O}_{\mathrm{SL}_{2}\times\mathbb{A}_{k}^{1}})^{\mathbb{G}_{a,k}})$
in the category of affine schemes is isomorphic to $Y$. 
\end{lem}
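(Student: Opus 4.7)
The plan is to reduce the statement to two standard facts already assembled earlier in the paper: (i) the map $\mathrm{pr}_1 : W_m \to Y_*$ built in subsection \ref{subsec:Proof-of-Theorem} is a Zariski-locally-trivial $\mathbb{G}_{a,k}$-torsor with $W_m \cong \mathrm{SL}_2 \times \mathbb{A}_k^1$, and (ii) the inclusion $Y_* = Y \setminus \ell \hookrightarrow Y$ is the complement of a codimension-two closed subset in the smooth affine variety $Y$. Under the identification of $W_m$ with $\mathrm{SL}_2\times\mathbb{A}_k^1$ coming from (\ref{eq:Xm-SL2-cylinders}), the $\mathbb{G}_{a,k}$-action on $\mathrm{SL}_2\times\mathbb{A}_k^1$ appearing in the theorem is precisely the one whose torsor structure is $\mathrm{pr}_1$, so computing the invariant ring of the former amounts to computing global sections of $\mathcal{O}_{Y_*}$.

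For the first step I would observe that any $\mathbb{G}_{a,k}$-torsor is an affine morphism, so $(\mathrm{pr}_1)_*\mathcal{O}_{W_m}$ is a quasi-coherent $\mathcal{O}_{Y_*}$-algebra whose subsheaf of $\mathbb{G}_{a,k}$-invariants is $\mathcal{O}_{Y_*}$ itself. Taking global sections yields the identification
\[
\Gamma(\mathrm{SL}_2\times\mathbb{A}_k^1,\mathcal{O})^{\mathbb{G}_{a,k}} \;=\; \Gamma(W_m,\mathcal{O}_{W_m})^{\mathbb{G}_{a,k}} \;=\; \Gamma(Y_*,\mathcal{O}_{Y_*}).
\]

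For the second step I would invoke the algebraic Hartogs extension property: since $Y$ is a smooth (hence normal) affine threefold and $\ell \cong \mathbb{A}_k^1$ has dimension one, the closed subscheme $\ell \subset Y$ has codimension two, and therefore the restriction map $\Gamma(Y,\mathcal{O}_Y) \to \Gamma(Y_*,\mathcal{O}_{Y_*})$ is an isomorphism. Combining the two identifications gives $\Gamma(\mathrm{SL}_2\times\mathbb{A}_k^1,\mathcal{O})^{\mathbb{G}_{a,k}} \cong \Gamma(Y,\mathcal{O}_Y)$, and passing to $\mathrm{Spec}$ produces the required isomorphism $(\mathrm{SL}_2\times\mathbb{A}_k^1)/\!/\mathbb{G}_{a,k} \cong Y$. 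I do not anticipate any genuine obstacle at this stage: all of the nontrivial geometric content has been carried out in the construction of the torsor $W_m \to Y_*$ in subsection \ref{subsec:Proof-of-Theorem}, and what remains is a direct application of affineness of torsors together with Hartogs' theorem on a smooth variety.
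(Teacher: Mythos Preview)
Your proposal is correct and follows essentially the same approach as the paper: both arguments identify $\Gamma(W_m,\mathcal{O}_{W_m})^{\mathbb{G}_{a,k}}$ with $\Gamma(Y_*,\mathcal{O}_{Y_*})$ via the torsor $\mathrm{pr}_1:W_m\to Y_*$, and then invoke normality of $Y$ together with $\mathrm{codim}_Y(\ell)=2$ to conclude $\Gamma(Y_*,\mathcal{O}_{Y_*})=\Gamma(Y,\mathcal{O}_Y)$. The only cosmetic difference is that the paper phrases the first step via the universal property of the categorical quotient and the affinization morphism, whereas you compute the invariant subsheaf of $(\mathrm{pr}_1)_*\mathcal{O}_{W_m}$ directly.
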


\begin{proof}
The universal properties of the categorical quotient $\mathrm{pr}_{1}:W_{m}\rightarrow W_{m}/\mathbb{G}_{a,k}=Y_{*}$
and the affinization morphism $W_{m}/\mathbb{G}_{m,k}\rightarrow\mathrm{Spec}(\Gamma(W_{m}/\mathbb{G}_{m,k},\mathcal{O}_{W_{m}/\mathbb{G}_{m,k}}))$
imply that 
\[
W_{m}/\!/\mathbb{G}_{a,k}=\mathrm{Spec}(\Gamma(W_{m}/\mathbb{G}_{m,k},\mathcal{O}_{W_{m}/\mathbb{G}_{m,k}}))=\mathrm{Spec}(\Gamma(Y_{*},\mathcal{O}_{Y_{*}})).
\]
Since $Y$ is a smooth, hence normal, affine variety and $\ell$ has
pure codimension $2$ in $Y$, we have $\Gamma(Y_{*},\mathcal{O}_{Y_{*}})=\Gamma(Y,\mathcal{O}_{Y})$.
Thus 
\[
(\mathrm{SL}_{2}\times\mathbb{A}_{k}^{1})/\!/\mathbb{G}_{a,k}\cong W_{m}/\!/\mathbb{G}_{a,k}\cong\mathrm{Spec}(\Gamma(Y,\mathcal{O}_{Y}))=Y.
\]
\end{proof}

\subsection{Complements}

The proof of Theorem \ref{thm:MainTheorem} provides a systematic
method to construct for each given deformed Koras-Russell threefold
$Y=Y(m,n,r,h)$ an explicit locally nilpotent derivation $\delta$
of the coordinate ring of $\mathrm{SL}_{2}\times\mathbb{A}_{k}^{1}$
with kernel isomorphic to $\Gamma(Y,\mathcal{O}_{Y})$, usually at
the cost of a series of tedious calculations. 

To explain the scheme of this method, let $\mathbb{A}_{x,t}^{2}=\mathrm{Spec}(k[x,t])$,
$\mathbb{A}_{x,v}^{2}=\mathrm{Spec}(k[x,v])$, and choose the coordinates
so that $Y=Y(m,n,r,h)$ and $X_{m}$ are given by the equations 
\[
x^{n}z=y^{m}-t^{r}+xh(x,y,t)\quad\textrm{and}\quad v^{m}t-xu=1
\]
in $\mathbb{A}_{x,t}^{2}\times\mathrm{Spec}(k[y,z])$ and $\mathbb{A}_{x,t}^{2}\times\mathrm{Spec}(k[u,v])=\mathbb{A}_{x,v}^{2}\times\mathrm{Spec}(k[u,t])$
respectively. Let 
\[
\partial=x^{n}\frac{\partial}{\partial y}+(my^{m-1}+x\frac{\partial h}{\partial y}(x,y,t))\frac{\partial}{\partial z}\quad\textrm{and}\quad\partial_{m}=mv^{m-1}t\frac{\partial}{\partial u}+x\frac{\partial}{\partial v}
\]
be the locally nilpotent derivations defining the $\mathbb{G}_{a,k}$-actions
on $Y$ and $X_{m}$ respectively with algebraic spaces quotients
$Y_{*}/\mathbb{G}_{a,k}\cong\mathfrak{S}_{m}\cong X_{m}/\mathbb{G}_{a,k}$.
By the construction used in the proof of Theorem \ref{thm:MainTheorem},
we have a commutative diagram \[\xymatrix{ & W_m=Y_{*}\times_{\mathfrak{S}_{m}}X_{m} \ar[ddl] \ar[ddr]  \ar[r]^{\cong} & X_m\times \mathbb{A}^1_k \ar[dd] \ar[r]^{\cong} & X_m\times_{\mathbb{A}^2_{x,v}}X_1  \ar[ddl]  \ar[ddr]\ar[r]^{\cong}& \mathrm{SL}_2\times \mathbb{A}^1_k \ar[dd] \\ & \\ Y_* \ar[dr] & & X_m \ar[dl] \ar[ddr] & & X_1=\mathrm{SL_2} \ar[ddl]  \\ & \mathfrak{S}_m \ar[d] \\ & \mathbb{A}^2_{x,t} \ar[dr] & &  \mathbb{A}^2_{x,v} \ar[dl] \\ & & \mathbb{A}^1_{x}=\mathrm{Spec}(k[x]).}\] 

Let $\tilde{\partial}_{m}$ and $\tilde{\partial}$ be the commuting
locally nilpotent $k[x,t]$-derivations of the coordinate ring of
$W_{m}$ with kernels equal to $\Gamma(Y,\mathcal{O}_{Y})$ and $\Gamma(X_{m},\mathcal{O}_{X_{m}})$
corresponding to the $\mathbb{G}_{a,k}$-torsors $\mathrm{pr}_{1}:W_{m}=Y_{*}\times_{\mathfrak{S}_{m}}X_{m}\rightarrow Y_{*}$
and $\mathrm{pr}_{2}:W_{m}=Y_{*}\times_{\mathfrak{S}_{m}}X_{m}\rightarrow X_{m}$
respectively. Via the left-hand side isomorphism $\psi:W_{m}\rightarrow X_{m}\times\mathbb{A}_{k}^{1}$
of the top line of the diagram, the derivation $\tilde{\partial}$
corresponds to the locally nilpotent derivation $\frac{\partial}{\partial w}$
of the coordinate ring $\Gamma(X_{m},\mathcal{O}_{X_{m}})[w]$ of
$X_{m}\times\mathbb{A}_{k}^{1}$, whereas $\tilde{\partial}_{m}$
corresponds to the unique $k[x,t]$-derivation $\delta_{m}$ commuting
with $\frac{\partial}{\partial w}$, whose restriction to $\mathrm{Ker}(\frac{\partial}{\partial w})=\Gamma(X_{m},\mathcal{O}_{X_{m}})$
is equal to $\partial_{m}$ and such that $\delta_{m}(w)=\psi(\tilde{\partial}_{m}(\psi^{-1}(w)).$
In practice, the element $\psi^{-1}(w)\in\mathrm{Ker}\tilde{\partial}$
as well as its image by $\tilde{\partial}_{m}$ can be explicitly
determined by considering an \'etale cover of $S\rightarrow\mathfrak{S}_{m}$
of $\mathfrak{S}_{m}$ on which the $\mathbb{G}_{a,k}^{2}$-torsor
$W_{m}\rightarrow\mathfrak{S}_{m}$ becomes trivial. 

The commutativity of the diagram then implies that the locally nilpotent
derivation $\delta$ of the coordinate ring of $\mathrm{SL}_{2}\times\mathbb{A}_{k}^{1}$
corresponding to $\delta_{m}$ through the isomorphisms 
\[
X_{m}\times\mathbb{A}_{k}^{1}\cong X_{m}\times_{\mathbb{A}_{x,v}^{2}}X_{1}\cong\mathrm{SL}_{2}\times\mathbb{A}_{k}^{1}
\]
of schemes over $\mathrm{Spec}(k[x,v])$ is a $k[x]$-derivation.
Explicit isomorphisms $X_{m}\times\mathbb{A}_{k}^{1}\cong\mathrm{SL}_{2}\times\mathbb{A}_{k}^{1}$
for each $m\geq2$ can be constructed by finding explicit trivializations
of the $\mathbb{G}_{a,k}$-bundles $\mathrm{pr}_{1}:X_{m}\times_{\mathbb{A}_{x,v}^{2}}X_{1}\rightarrow X_{m}$
and $\mathrm{pr}_{2}:X_{m}\times_{\mathbb{A}_{x,v}^{2}}X_{1}\rightarrow X_{1}$.
In practice, this amounts to describe these bundles in terms \v{C}ech
1-cocyles with values in $\mathcal{O}_{X_{m}}$ and $\mathcal{O}_{X_{1}}$
on suitable open cover of $X_{m}$ and $X_{1}$ respectively, and
find explicit expressions of each of these 1-cocyles as coboundaries.
\\

Note in addition that letting $X(m,n,r)=\left\{ v^{m}t^{r}-x^{n}u=1\right\} $,
where $\gcd(m,r)=1$, endowed with the $\mathbb{G}_{a,k}$-action
determined by the locally nilpotent $k[x,t]$-derivation 
\[
\partial_{(m,n,r)}=mv^{m-1}t^{r}\frac{\partial}{\partial u}+x^{n}\frac{\partial}{\partial v},
\]
we have by Proposition \ref{prop:Xmnr-ALgQuotient} isomorphisms 
\[
X_{m}\times\mathbb{A}_{k}^{1}\cong X_{m}\times_{\mathfrak{S}_{m}}X(m,n,r)\cong X(m,n,r)\times\mathbb{A}_{k}^{1}
\]
of schemes over $\mathbb{A}_{x,t}^{2}$. The derivation $\tilde{\partial}_{m}$
corresponds via these isomorphisms to a unique $k[x,t]$-derivation
$\delta_{(m,n,r)}$ of the coordinate ring $\Gamma(X(m,n,r),\mathcal{O}_{X(m,n,r)})[\omega]$
of $X(m,n,r)\times\mathbb{A}_{k}^{1}$ commuting with $\frac{\partial}{\partial\omega}$
and whose restriction to $\mathrm{Ker}(\frac{\partial}{\partial\omega})=\Gamma(X(m,n,r),\mathcal{O}_{X(m,n,r)})$
is equal to $\partial_{(m,n,r)}$. Depending on the deformed Koras-Russell
threefold $Y$, the element $\delta_{(m,n,r)}(\omega)\in\Gamma(X(m,n,r),\mathcal{O}_{X(m,n,r)})$
can be easier to determine for suitably chosen $n$ and $r$ than
the element $\delta_{m}(w)\in\Gamma(X_{m},\mathcal{O}_{X_{m}})$.
For instance, we have: 
\begin{example}
The fixed point free $\mathbb{G}_{a,k}$-actions on 
\[
X_{(2,2,3)}\times\mathbb{A}_{k}^{1}=\mathrm{Spec}(k[x,t,u,v][\omega]/(v^{2}t^{3}-x^{2}u-1))
\]
whose algebraic quotients are the deformed Russell cubic threefolds
$Y_{\alpha}\subset\mathbb{A}_{k}^{4}$ with equations 
\[
x^{2}z=y^{2}-t^{3}+x(1+\alpha t),\quad\alpha\in k,
\]
are given by the locally nilpotent $k[x,t]$-derivations 
\[
\delta_{(2,2,3),\alpha}=2vt^{3}\frac{\partial}{\partial u}+x^{2}\frac{\partial}{\partial v}+(\frac{1}{2}(1+\alpha t)x-t^{3})\frac{\partial}{\partial\omega}.
\]
\end{example}

\bibliographystyle{amsplain}

\end{document}